\newcommand{\tom}[1]{}
\newcommand{\pp}{{\mathbb P}}
\begin{document}

   \parindent0cm

   \title[Triple Point]{Triple-Point Defective Regular Surfaces}
   
   \author{Luca Chiantini}
      \address{Universit\'a degli Studi di Siena\\
     Dipartimento di Scienze Matematiche e Informatiche\\
     Pian dei Mantellini, 44
     I -- 53100 Siena
     }
   \email{chiantini@unisi.it}
  \urladdr{http://www.dsmi.unisi.it/newsito/docente.php?id=4}
   
   \author{Thomas Markwig}
   \address{Universit\"at Kaiserslautern\\
     Fachbereich Mathematik\\
     Erwin-Schr\"odinger-Stra\ss e\\
     D -- 67663 Kaiserslautern
     }
   \email{keilen@mathematik.uni-kl.de}
   \urladdr{http://www.mathematik.uni-kl.de/\textasciitilde keilen}
   \thanks{The second author was supported by the EAGER node of Torino,
     and by the Institute for Mathematics and its Applications (IMA),
     University of Minnesota.}

   \subjclass{14H10, 14J10, 14C20, 32S15}

   \date{30 November, 2006.}


   \begin{abstract}
     In this paper we study the linear series $|L-3p|$ of hyperplane
     sections with a triple point $p$ of a surface $S$ embedded via a very
     ample line bundle $L$ for a \emph{general} point $p$. If this linear
     series does not have the expected dimension we call $(S,L)$
     \emph{triple-point defective}. We show that on a triple-point defective
     \emph{regular} surface through a general point every hyperplane section
     has either a triple component or the surface is rationally ruled
     and the hyperplane section contains twice a fibre of the ruling. 
   \end{abstract}

   \maketitle

   \emph{The results of this paper have been generalised in the paper
     \emph{Triple point defective surfaces} (arXiv:0911.1222) by the same authors. Large
     parts of the latter paper coincide with this paper and the reader
     should rather refer to that paper than to this one.}

   \section{Introduction}\label{sec:tpd}

   Throughout this note, $S$ will be a smooth projective surface,
   $K=K_S$ will denote the canonical class and $L$ will be a
   divisor class on $S$ such that $L$ and $L-K$ are both \emph{very ample}.

   The classical \emph{interpolation problem} for the pair $(S,L)$ is devoted
   to the study of the varieties:
   \begin{displaymath}
     V^{gen}_{m_1,\dots,m_n}=\big\{C\in |L|\;\big|\; p_1,\ldots,p_n\in
     S\mbox{ general},\;\mult_{p_i}(C)
     \geq m_i\big\}.
   \end{displaymath}

   In a more precise formulation, we start from the incidence variety:
   \begin{displaymath}
     \kl_{m_1,\dots,m_n}=\{(C,(p_1,\ldots,p_n))\in|L|\times S^n\;|\;\mult_{p_i}(C)\geq m_i\}
   \end{displaymath}
   together with the canonical projections:
   \begin{equation}\label{eq:alphabeta}
     \xymatrix{
       \kl_{m_1,\dots,m_n}\ar[r]^\alpha\ar[d]_\beta & S^n\\
       |L|=\pp(H^0(L)^*)
     }
   \end{equation}
   As for the map $\alpha$, the fibre  over a fixed point $(p_1,\dots,p_n)\in S^n$
   is just the linear series $|L-m_1p_1-\dots-m_np_n|$ of effective divisors in
   $|L|$ having a point of multiplicity at least $m_i$ at $p_i$.
   These fibres being irreducible, we deduce that if $\alpha$ is
   \emph{dominant} then $\kl_{m_1,\dots,m_n}$ has a unique
   irreducible component, say $\kl_{m_1,\dots,m_n}^{gen}$, which dominates $S$.
   The closure of its image
   \begin{displaymath}\label{VVV}
     V_{m_1,\dots,m_n}:=V_{m_1,\dots,m_n}(S,L):=\overline{\beta(\kl_{m_1,\dots,m_n}^{gen})}
   \end{displaymath}
   under $\beta$ is an irreducible closed subvariety of
   $|L|$, a \emph{Severi variety} of $(S,L)$.

   Imposing a point of multiplicity $m_i$ corresponds to killing 
   $\binom{m_i+1}2$ partial derivatives, so that
   \begin{displaymath}
     \dim|L-m_1p_1-\dots-m_np_n|\geq \max\left\{-1,\dim|L|-\sum_{i=1}^n\binom{m_i+1}2\right\},
   \end{displaymath}
   and we expect that the previous inequality is in fact an equality, for the choice
   of general points $p_1,\dots,p_n\in S$.

   When this is not the case, then the surface is called \emph{defective}
   and is endowed with some special structure.
   \smallskip

   The case when $m_i=2$ for all $i$ has been classically considered (and solved)
   by Terracini, who classified in \cite{Ter22} double--point defective surfaces. 
   In any event, the first example of such a defective surface which is smooth is the
   Veronese surface, for which $n=2$. 

   It is indeed classical that imposing multiplicity two at a general point 
   to a very ample line bundle $|L|$ always 
   yields three independent conditions, so that $\dim|L-2p|=\dim |L|-3$ and
   the corresponding Severi variety has codimension $1$ in $|L|$.

   Furthermore, when $S$ is double-point defective, then any general curve 
   $C\in|L-2p_1-\dots-2p_n|$ has a double component passing through each point $p_i$.

   When the multiplicities grow, the situation becomes completely different.
   Even in the case $S=\pp^2$, the situation is not understood
   and there are several, still unproved conjecture on the structure
   of defective embeddings (see \cite{Cil01} for an introductory survey).

   When $S$ is a more complicated surface, it turns out that even imposing
   just one point of multiplicity $3$, one may expect to obtain
   a defective behaviour.

   \begin{example}\label{ex:hirzebruch}
     Let $S=\F_e\stackrel{\pi}{\longrightarrow}\pp^1$ be a
     Hirzebruch surface, $e\geq 0$. We denote by
     $F$ a fibre of $\pi$ and by $C_0$ the section of $\pi$ of
     minimal self intersection $C_0^2=-e$ -- both of which are smooth
     rational curves.
     The general element $C_1$ in the linear system
     $|C_0+eF|$ will be a section of $\pi$ which does not
     meet $C_0$ (see e.g.\ \cite{FuP00}, Theorem 2.5).

     Consider now the divisor $L=2\cdot F+C_1=(2+e)\cdot
     F+C_0$. Then for a
     general $p\in S$ there are curves $C_p\in|C_1-p|$ and there is a
     unique curve $F_p\in |F-p|$, in particular $p\in F_p\cap C_p$. For each
     choice of $C_p$ we have
     \begin{displaymath}
       2F_p+C_p \in|L-3p|.
     \end{displaymath}
     Since $F.L=1=F.(L-F)$ we see that every curve in $|L-3p|$ must
     contain $F_p$ as a double component, i.e.
     \begin{displaymath}
       |L-3p|=2F_p+|C_1-p|.
     \end{displaymath}
     Moreover, since $p\in S$ is general we have (see \cite{FuP00}, Lemma 2.10)
     \begin{displaymath}
       \dim|C_1-p|=\dim|C_1|-1=h^0\big(\pp^1,\ko_{\pp^1}\big)+
       h^0\big(\pp^1,\ko_{\pp^1}(e)\big)-2=e
     \end{displaymath}
     and, using the notation from above,
     \begin{displaymath}
       \dim(V_3)\geq \dim|C_1-p|+2=e+2.
     \end{displaymath}
     However,
     \begin{displaymath}
       \dim|L|= h^0\big(\pp^1,\ko_{\pp^1}(2)\big)
       +h^0\big(\pp^1,\ko_{\pp^1}(2+e)\big)-1
       =e+5,
     \end{displaymath}
     and thus
     \begin{displaymath}
       \expdim(V_3)=\dim|L|-4=e+1<e+2=\dim(V).
     \end{displaymath}
     We say, $(\F_e,L)$ is \emph{triple-point defective}, see
     Definition \ref{def:tpd}.

     Note, moreover, that
     \begin{displaymath}
       (L-K)^2=(4F+3C_0)^2=24>16.
     \end{displaymath}
     \hfill$\Box$
   \end{example}

   It is interesting to observe that, even though, in the previous example,
   the general element of $|L-3p|$ is non reduced, still the map $\beta$
   of Diagram \eqref{eq:alphabeta} has finite general fibers, since
   the general element of $|L-3p|$ has no triple components.
   \smallskip

   The aim of this note is to investigate the structure of pairs $(S,L)$ 
   for which the linear system $|L-3p|$ for $p\in S$ general has dimension bigger that the expected value
   $\dim|L|-6$, or equivalently, the variety $\kl^{gen}_3$, defined as
   in Diagram \eqref{VVV}, has dimension bigger than  
   $\dim|L|-4$.

   \begin{definition}\label{def:tpd}
     We say that the pair $(S,L)$ is \emph{triple-point defective}  or, in classical
     notation, that \emph{$(S,L)$ satisfies one Laplace equation} if
     \begin{displaymath}
       \dim|L-3p|>\max\{-1,\dim|L|-6\}=\expdim|L-3p|
     \end{displaymath}
     for $p\in S$ general.
   \end{definition}

   \begin{remark}\label{L3}
     Going back to Diagram \eqref{VVV}, one sees that $(S,L)$ is
     triple-point defective if and only if either: 
     \begin{itemize}
     \item $\dim|L|\leq 5$ and the projection
       $\alpha:\kl_3\rightarrow S$ dominates, or \smallskip
     \item $\dim|L|>5$ and the general fibre of the map $\alpha$ has
       dimension at least $\dim|L|-5$.
     \end{itemize}
     In particular, $(S,L)$ is triple-point defective if and only if the map $\alpha$ is
     \emph{dominant} and
     \begin{displaymath}
       \dim(\kl_3^{gen})>\dim|L|-4.
     \end{displaymath}
   \end{remark}

   The particular case in which the general fiber of the map $\beta$ in 
   Diagram \eqref{VVV} is positive-dimensional,
   (i.e.  the general member of $V_3$ contains a triple
   component through $p$) has been investigated in \cite{Cas22},
   \cite{FrI01}, and \cite{BoC05}. We will recall the classification of such surfaces
   in Theorem \ref{thm:notfinite} below.

   Even when $\beta$ is generically finite, one of the major subjects
   in algebraic interpolation theory, namely Segre's conjecture on
   defective linear systems \emph{in the plane},  
   says in our situation that, when $(S,L)$ is triple-point defective,
   then the general element of $|L-3p|$ must be non-reduced, with a double component
   through $p$ (exactly as in the case of Hirzebruch surfaces).

   We are able to show, under some assumptions, that this part of Segre's conjecture 
   holds, even in the more general setting of \emph{regular} surfaces. 

   Indeed our main result is:

   \begin{theorem}\label{thm:aim1}
     Let $S$ be a \emph{regular} surface, and
     suppose that with the notation
     in \eqref{eq:alphabeta} $\alpha$ is dominant. 
     Let $L$ be a very ample line bundle on $S$, such that $L-K$
     is also very ample. Assume $(L-K)^2>16$ and $(S,L)$ is triple-point defective.

     Then $S$ is rationally ruled in the embedding defined by $L$.
     Moreover a general curve $C\in|L-3p|$ contains the fibre of the
     ruling through $p$ 
     as fixed component with multiplicity at least two. 
   \end{theorem}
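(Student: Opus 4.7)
The plan is to split the argument according to whether the map $\beta$ in Diagram \eqref{eq:alphabeta} is generically finite. If $\beta$ has positive-dimensional general fibre, then by definition the general curve in $V_3$ contains a triple component through the general point $p$, and the classification of such pairs recalled in Theorem \ref{thm:notfinite} applies; I would go through that list and verify that, among the listed pairs $(S,L)$, the only ones compatible with $h^1(\mathcal{O}_S)=0$ and $(L-K)^2>16$ are rationally ruled with the triple component being a fibre of the ruling, giving the conclusion outright.

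The substantial case is $\beta$ generically finite. Here the idea is to perform an infinitesimal analysis in $\kl_3^{gen}$. For a general $p\in S$ and a general $C\in|L-3p|$, the triple-point defectiveness gives $\dim \kl_3^{gen}>\dim|L|-4$, so there exists a one-parameter deformation $(C_t,p_t)\in \kl_3^{gen}$ with $(C_0,p_0)=(C,p)$ and nonzero tangent vector $v=\dot p_0\in T_pS$. Regularity $h^1(\mathcal{O}_S)=0$ lets me identify deformations of the linear series $|L-3p|$ as $p$ varies with honest sections of $L\otimes\mathfrak{m}_p^3$; differentiating the condition $\mult_{p_t}(C_t)\geq 3$ then yields linear constraints on the interaction between $v$ and the cubic tangent cone $T_pC$. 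The expected count produces three independent constraints, so the extra dimension forces a dependence, which in turn forces the tangent cone $T_pC$ to be non-reduced: it must contain a repeated linear factor.

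As $p$ varies in an open subset of $S$, the repeated tangent direction integrates to a global family of curves $\{A_p\}_{p\in S}$ with $A_p\ni p$, appearing with multiplicity at least two as a fixed component of the general $C\in|L-3p|$. Writing $C=2A_p+C'$ with $\mult_p C'\geq 1$ and using adjunction together with Reider's theorem applied to the very ample divisor $L-K$ — the hypothesis $(L-K)^2>16$ placing us well beyond the standard Reider thresholds — I expect to force $A_p$ to be a smooth rational curve with $A_p^2=0$, so $K\cdot A_p=-2$. A moving family of such curves sweeping out $S$ defines a morphism $S\to B$ to a smooth curve with the $A_p$ as its fibres, and regularity of $S$ forces $B\cong \mathbb{P}^1$, so $S$ is rationally ruled in the embedding by $L$, with $A_p$ the fibre through $p$.

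The hard part will be the passage from the infinitesimal statement (a repeated tangent direction of $C$ at $p$) to the existence of a \emph{global} fixed double curve $A_p$ with the right numerical profile. One must rule out that the coincidence of tangents is only a pointwise artefact — for instance, from a cuspidal singularity of an irreducible $C$ — rather than a genuine fixed component of the linear series, and one must pin the self-intersection to be exactly zero. The hypothesis $(L-K)^2>16$ is the quantitative input that closes these loopholes, through Reider-type bounds that forbid pathological low-degree curves on $S$.
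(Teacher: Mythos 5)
Your proposal correctly senses the role of $(L-K)^2>16$ as a Reider/Bogomolov-type threshold and arrives at the right final picture, but it has two genuine gaps that the paper's machinery exists precisely to fill. First, the claimed infinitesimal deduction --- that the dependence among the extra conditions forces the cubic tangent cone of $C$ at $p$ to acquire a repeated linear factor --- is not correct as stated. What defectiveness actually yields is $h^1\big(S,\kj_{Z_p}(L)\big)\neq 0$ for the equimultiplicity scheme $Z_p$ of $L_p$ at $p$ (Section \ref{sec:equimultiple}), and the local analysis of Lemma \ref{lem:ideals} and Table \eqref{eq:3jets} shows that the case $\jet_3(f_p)=x_p^3-y_p^3$ --- an ordinary triple point with \emph{reduced} tangent cone --- survives all local and numerical arguments. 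It is eliminated only at the very end (Theorem \ref{thm:regular}) by a global argument: in that case $\length(Z'_p)=4$, the destabilizing divisor splits as $B_p=E_p+F_p$ with $E_p,F_p$ elliptic, and Proposition \ref{prop:length4} shows the families $|E|_a$, $|F|_a$ are provably \emph{not} linear systems, contradicting the regularity of $S$. So a non-reduced tangent cone cannot be extracted pointwise from the defectiveness.

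Second, and more seriously, the passage from a distinguished tangent direction at the general point to a global curve sitting with multiplicity two inside the general member of $|L-3p|$ is the crux, and your proposal offers no mechanism for it: a field of tangent directions integrates to a foliation, not to an algebraic family of curves appearing doubly as fixed components. The paper produces the curve by an entirely different route: Serre's construction applied to a minimal subscheme $Z'_p\subseteq Z_p$ with $h^1\neq 0$ yields a rank-two bundle $\ke_p$ with $c_1=L-K$ and $c_2\leq 4$; the hypothesis $(L-K)^2>16$ makes $\ke_p$ Bogomolov unstable; the effective divisor $B_p\supset Z'_p$ is the destabilizing quotient; Lemma \ref{lem:C} shows its family has no fixed part; and the numerics $A.B=3$, $B^2=0$, $K.B=-2$ of Proposition \ref{prop:length3} then force $2B_p\subset L_p$ by intersection-theoretic contradictions --- the tangency and the multiplicity two are conclusions, not inputs. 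Finally, your case split on generic finiteness of $\beta$ adds an unverified branch (checking the developable-scroll classification of Theorem \ref{thm:notfinite} against regularity and $(L-K)^2>16$) that the paper avoids entirely, since its argument never uses generic finiteness of $\beta$.
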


   \begin{remark}
     In a forthcoming paper \cite{CM07a} we classify all triple-point
     defective linear systems $L$ on ruled surfaces satisfying the
     assumptions of Theorem \ref{thm:aim1}, and it follows from this
     classification that the linear system $|L-3p|$ will contain the
     fibre of the ruling  through $p$ precisely with multiplicity two
     as a fixed component. In particular, the map $\beta$ will
     automatically be generically finite.
   \end{remark}

   Our method is based on the observation that, when $(S,L)$ is
   triple-point defective, then at a general point 
   $p\in S$ there exists a non-reduced scheme $Z_p$ supported at the point,
   such that 
   \begin{displaymath}
     h^1\big({S},\kj_{Z_p}(L)\big)\not=0.
   \end{displaymath}
   By Serre's construction, this yields the existence of a rank $2$ bundle
   $\ke_p$ with first Chern class $L-K$, with a global section whose zero-locus 
   is a subscheme of length at most $4$, supported at $p$.
   Moreover the assumption $(L-K)^2>16$ implies that $\ke_p$ is
   \emph{Bogomolov unstable}, thus it has a destabilizing divisor $A$.
   By exploiting the properties of $A$ and $B=L-K-A$, we obtain the result.
   \smallskip
   
   In a sort of sense, one of the main points missing for the proof of Segre's conjecture
   is a natural geometric construction for the non--reduced divisor which must be
   part of any defective linear system. 

   For double-point defective surfaces,
   the non--reduced component comes from contact loci of hyperplanes (see \cite{ChC02}).

   In our setting, the non--reduced component is essentially given by the 
   effective divisor $B$ above, which comes from a destabilizing divisor
   of the rank $2$ bundle.

  The result, applied to the blowing up of $\pp^2$,
   leads to the following partial proof of Segre's conjecture on defective
   linear systems in the plane.
   
   \begin{corollary}
     Fix multiplicities  $m_1\leq m_2\leq \dots\leq m_n$.
     Let $H$ denote the class of a line in $\pp^2$ and assume that, for
     $p_1,\dots,p_n$ general in $\pp^2$, the linear
     system $M=rH-m_1p_1-\dots-m_np_n$ is defective, i.e.
     \begin{displaymath}
       \dim|M|> \max\left\{-1,\binom{r+2}2-\sum_{i=1}^n\binom{m_i+1}2\right\}.
     \end{displaymath} 
     Let $\pi:S\longrightarrow\pp^2$ be the blowing up of $\pp^2$ at the
     points $p_2,\dots,p_n$ and set 
     $L:= r\pi^*H-m_2E_2-\dots-m_nE_n$, where $E_i=\pi^*(p_i)$ is the
     i-th exceptional divisor. Assume that $L$ is very ample on $S$, 
     of the expected dimension  $\binom{r+2}2-\sum_{i=2}^n\binom{m_i+1}2$, and 
     that also $L-K$ is very ample on $S$, with $(L-K)^2>16$.
     Assume, finally, $m_1\leq 3$. 

     Then $m_1=3$ and the general element of $M$
     is non-reduced. Moreover $L$ embeds $S$ as a ruled surface.
   \end{corollary}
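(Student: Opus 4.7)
The plan is to transfer the defectivity of $M$ on $\pp^2$ into a triple-point defectivity statement for $(S,L)$ and then invoke Theorem~\ref{thm:aim1}. The starting observation is that the pullback by $\pi$ identifies $M=|rH-m_1p_1-m_2p_2-\cdots-m_np_n|$ on $\pp^2$ with the linear system $|L-m_1p_1|$ on $S$, regarding $p_1$ as a general point of $S$ (a general point of $\pp^2$ away from $p_2,\dots,p_n$). Since $|L|$ already has the expected dimension on $S$ by hypothesis, the assumed defectivity of $M$ translates directly into the defectivity
\begin{displaymath}
  \dim|L-m_1p_1| \;>\; \max\bigl\{-1,\,\dim|L|-\tbinom{m_1+1}{2}\bigr\}
\end{displaymath}
of $|L|$ at a single general point of multiplicity~$m_1$.

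The second step is to rule out $m_1\in\{0,1,2\}$. For $m_1=0$ nothing is imposed and $M$ reduces to $|L|$, which is non-defective by assumption. For $m_1=1$ a general simple point imposes exactly one independent condition on the very ample, hence base-point-free, system $|L|$, so the dimension drops by one. For $m_1=2$ the classical fact recalled in the introduction says that imposing one double point at a general point of a very ample system on a surface always gives three independent conditions, so $|L-2p_1|$ is non-defective. Hence necessarily $m_1=3$, and $(S,L)$ is triple-point defective in the sense of Definition~\ref{def:tpd}.

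It remains to verify the remaining hypotheses of Theorem~\ref{thm:aim1}. Regularity of $S$ is immediate: $S$ is an iterated blowup of $\pp^2$ at smooth points, and blowing up a smooth surface preserves $h^1(\ko)=0$. Very ampleness of $L$ and $L-K$ and the inequality $(L-K)^2>16$ are built into the hypotheses of the corollary, while dominance of the incidence projection $\alpha$ is automatic since the defectivity above forces $|L-3p_1|$ to be non-empty for general $p_1$. Theorem~\ref{thm:aim1} then yields simultaneously that $L$ embeds $S$ as a rationally ruled surface and that the general curve of $|L-3p_1|$ contains the fibre of the ruling through $p_1$ as a fixed component of multiplicity at least two; in particular such a curve is non-reduced, which is the statement required of a general element of $M$. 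The only genuinely delicate step is the elimination of the cases $m_1\le 2$; once $m_1=3$ is established, the corollary is a direct translation of the main theorem.
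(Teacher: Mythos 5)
Your proof is correct and follows essentially the same route as the paper, whose own proof is the one-line ``just apply Theorem~\ref{thm:aim1} to $(S,L)$''; you simply make explicit the translation of the defectivity of $M$ into triple-point defectivity of $(S,L)$, the elimination of $m_1\le 2$ via the classical double-point fact, and the verification of the theorem's hypotheses. All of these steps are sound and are exactly what the paper's terse proof implicitly relies on.
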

   \begin{proof} 
     Just apply the Main Theorem \ref{thm:aim1} to the pair $(S,L)$.
   \end{proof}

   The reader can easily check that the previous result is exactly the translation
   of Segre's and Harbourne--Hirschowitz's conjectures on defective linear 
   systems in the plane, for the case of a \emph{minimally} defective
   system with lower multiplicity $3$. The $(-1)$--curve predicted by
   Harbourne--Hirschowitz conjecture, in this situation, is just the
   pull-back of a line of the ruling.

   Although the conditions ``$L$ and $L-K$ very ample'' is not
   mild, we believe that the previous result could strengthen our believe
   in the general conjecture. Combining results in \cite{Xu95} 
   and \cite{Laz97} Corollary.\ 2.6 one can give numerical conditions on
   $r$ and the $m_i$ such that $L$ respectively $L-K$ are very ample. 

   \smallskip
   The paper is organized as follows.

   The case where $\beta$ is not generically finite 
   is pointed out in Theorem \ref{thm:notfinite} in Section
   \ref{sec:triplecomponents}. 
   In Section \ref{sec:equimultiple} we reformulate the problem as an
   $h^1$-vanishing problem.  The Sections 
   \ref{sec:construction} to \ref{sec:regular}  are
   devoted to the proof of the main result: in Section \ref{sec:construction} we
   use Serre's construction and Bogomolov instability in order to show
   that triple-point defectiveness leads to the existence of very
   special divisors $A$ and $B$ on our surface; in Section \ref{sec:zero} we
   show that $|B|$ has no fixed component; in Section \ref{sec:generalcase} we then
   list properties of $B$ and we use these in Section \ref{sec:regular} to classify
   the regular triple-point defective surfaces.

   \section{Triple components}\label{sec:triplecomponents}

   In this section, we consider what happens when, in Diagram \eqref{VVV},
    the general fiber of $\beta$ is positive-dimensional,
   in other words, when  the general member of $V_3$ contains a triple
   component through $p$. 

   This case has been investigated (and essentially solved) in \cite{Cas22}, 
   and then re\-phrased in modern language in  \cite{FrI01} and \cite{BoC05}. 
   
   Although not strictly necessary for the sequel, as our arguments 
   do not make any use of the generic finiteness of $\beta$, 
   (and so we will not assume this), for the sake of completeness
   we recall in this section some example and 
   the classification of pairs $(S,L)$
   which are triple-point defective, and such that a general curve $L_p\in|L-3p|$
   has a triple component through $p$.
   \smallskip

   The family $\kl_3$ of pairs $(L,p)\in |L|\times S$ where $L\in |L-3p|$
   has dimension bounded below by $\dim|L|-4$, and in Remark \ref{L3}
   it has been pointed out that $(S,L)$ is triple-point defective exactly when $\alpha$ is
   dominant and the bound is not attained.
   
   Notice however that $\dim|L|-4$ is {\it not} necessarily a bound for the dimension
   of the subvariety $V_3\subset |L|$, the image of $\kl_3$ under $\beta$.
   The following example (exploited in \cite{LaM02}) shows that one may have $\dim(V_3)<\dim|L|-4$ 
   even when $(S,L)$ is \emph{not} triple-point defective.
  
   \begin{example}[(see \cite{LaM02})]
      Let $S$ be the blowing up
     of $\pp^2$ at $8$ general points $q_1,\dots,q_8$ and $L$ corresponds to the system
     of curves of degree nine in $\pp^2$, with a triple point at each $q_i$.

     $\dim|L|=6$, but for $p\in S$ general, the unique divisor in
     $|L-3p|$ coincides with the cubic plane curve through $q_1,\dots,q_8,p$,
     counted three times. As there exists only a (non-linear) $1$-dimensional family
     of such divisors in $|L|$, then $\dim(V_3)=1<\dim|L|-4$. On the other hand,
     these divisors have a triple component, so that the general fibre of
     $\beta$ has dimension $1$, hence $\dim(\kl_3)=2=\dim|L|-4$.
   \end{example}
   
   The classification of triple-point defective pairs $(S,L)$ for which the map $\beta$ is not generically
   finite is the following.

   \begin{theorem} \label{thm:notfinite}
     Suppose that $(S,L)$ is triple-point defective. Then for $p\in S$ general, 
    the general member of $|L-3p|$
    contains a triple  component through $p$ if and only if
     $S$ lies in a threefold $W$ which is a scroll in planes and moreover $W$ is
     developable, i.e. the tangent space to $W$ is constant along the planes.
   \end{theorem}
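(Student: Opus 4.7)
The plan is to follow the modern approach of \cite{FrI01} and \cite{BoC05} to Castelnuovo's classical result, working via osculating spaces. Write $S\subset\pp^N$ for the embedding defined by $L$. A divisor of $|L-3p|$ then corresponds exactly to a hyperplane of $\pp^N$ that contains the second osculating space $T^{(2)}_pS$ of $S$ at $p$. The condition that the general fibre of $\beta$ be positive-dimensional means, concretely, that there is a curve $C_p\ni p$ which must appear with multiplicity at least three in every element of $|L-3p|$.

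For the direction \emph{developable scroll $\Rightarrow$ triple component}, I would start from $S\subset W\subset\pp^N$ with $W$ a developable $3$-fold scroll in planes $\{\Pi_t\}_{t\in T}$. Through a general $p\in S$ passes a unique plane $\Pi_{t(p)}$ of the scroll, meeting $S$ in a curve $C_p\ni p$. The developability hypothesis says that the embedded tangent $3$-space $T_qW$ is constant as $q$ moves on $\Pi_{t(p)}$. Hence any hyperplane $H$ containing $T_pW$ is automatically tangent to $W$, and so to $S$, along the entire curve $C_p$. Combining this with a local computation on the second-order neighbourhood of $C_p$ inside $W$, one deduces that $H\cap S\ge 3C_p$ for every hyperplane $H$ cutting out a divisor in $|L-3p|$.

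For the converse \emph{triple component $\Rightarrow$ developable scroll}, the family $\{C_p\}_{p\in S\text{ general}}$ organizes itself into a $1$-dimensional family $\{C_t\}_{t\in T}$ sweeping out $S$, because $C_p\ni p$ and $\dim S=2$. Setting $\Pi_t:=\langle C_t\rangle\subset\pp^N$, the triple-component condition forces every hyperplane $H$ containing the second osculating space $T^{(2)}_pS$ to contain the plane $\Pi_{t(p)}$; a parameter count of such $H$ shows $\dim\Pi_t\le 2$, with equality since $L$ is very ample and $S$ is not contained in a plane. The union $W:=\bigcup_{t\in T}\Pi_t$ is then a $3$-fold scroll in planes containing $S$.

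The main obstacle I expect is to prove that $W$ is actually \emph{developable}, i.e.\ that $T_qW$ is constant along each plane $\Pi_t$. The strategy is to express $T_qW$ as the span of $T_qS$ with the direction of infinitesimal deformation of $C_t$, and then to exploit the triple-component identity $3C_t\le H\cap S$ for hyperplanes $H$ containing $T^{(2)}_qS$ to show that this span is independent of $q\in C_t$. This is the geometric content of Castelnuovo's theorem \cite{Cas22} and requires a careful second-fundamental-form computation, which is the subtle part of the proof.
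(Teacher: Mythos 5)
Your overall skeleton matches the paper's: both implications, the one-dimensional family of triple curves $C_p$, their linear spans assembling into a threefold $W$, developability from tangency along $C_p$, and the converse via a local osculating computation at $p$. The genuine gap is at the pivotal step $\dim\langle C_p\rangle\le 2$. The parameter count you propose --- every hyperplane containing $T^{(2)}_pS$ contains $C_p$, hence contains $\Pi_{t(p)}$ --- only shows that $\Pi_{t(p)}$ lies in the common intersection of the hyperplanes of $|L-3p|$, i.e.\ in the linear span $T^{(2)}_pS$ itself; for a triple-point defective surface that span has dimension at most $4$, not $2$, so this count does not force $W$ to be a scroll in \emph{planes}. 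The paper gets the bound $2$ by a different mechanism: it first projects to $\pp^5$ (where $|L-3p|$ consists of a single hyperplane $H$ and $V_3$ is a curve), and then uses the \emph{second-order} variation of $H$ along $V_3$: since $H\cap S\ge 3C$, the two consecutive infinitesimally near hyperplanes $H'$, $H''$ on $V_3$ still contain $C$, so $C\subset H\cap H'\cap H''$, which is a $\pp^2$. Some use of the variation of the hyperplane within the family --- not merely of the hyperplanes through one fixed osculating space --- is unavoidable here.

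Relatedly, the developability, which you defer to ``a careful second-fundamental-form computation,'' is obtained in the paper from the \emph{first-order} variation of the same family: the tangent line to $V_3$ gives a pencil of hyperplanes each tangent to $S$ along $C$, hence a fixed $\pp^4=H_C$ tangent to $S$ along all of $C$, and then $T_qW=\langle\pi_C,T_{S,q}\rangle$ is pinned down inside $H_C$ independently of $q\in C$. You would also need the case distinction the paper makes when $C$ is a line (there $\langle C\rangle$ is not a plane, and one argues instead that the tangent planes along $C$ lie in a common $\pp^3$ and that $S$ sits in \emph{some} developable scroll). So the architecture is the same as the paper's, but the two infinitesimal arguments along $V_3$ --- which are the actual content of Castelnuovo's theorem --- are missing, and your substitute for the first of them does not yield the required bound.
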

   \begin{proof} (HINT) 
     First, since we assume that $S$ is triple-point defective and
     embedded in $\pp^r$ via $L$, then the hyperplanes
     $\pi$ that meet $S$ in a divisor $H=S\cap \pi$ with a triple point at a general
     $p\in S$, intersect in a $\pp^4$. Thus we may project down $S$ to $\pp^5$ and work
     with the corresponding surface.

     In this setting, through a general $p\in S$ one has only one hyperplane $\pi$ with
     a triple contact, and $\pi$ has a triple contact with $S$ along the fibre $C$ of
     $\beta$. Thus $V_3$ is a curve.

     If $H', H''$ are two consecutive infinitesimally near points to $H$ on $V_3$, then
     $C$ also belongs to $H\cap H'\cap H''$. Thus $C$ is a plane curve and $S$ is fibred
     by a $1$-dimensional family of plane curves. This determines the threefold scroll $W$.

     The tangent line to $V_3$ determines in $(\pp^5)^*$ a pencil of hyperplanes which are
     tangent to $S$ at any point of $C$, since this is the infinitesimal deformation
     of a family of hyperplanes with a triple contact along any point of $C$. Thus
     there is a $\pp^4=H_C$ which is tangent to $S$ along $C$.

     Assume that $C$ is not a line. Then $C$ spans a $\pp^2=\pi_C$ fibre of $W$, moreover the
     tangent space to $W$ at a general point of $C$ is spanned by $\pi_C$ and $T_{S,P}$, hence
     it is constantly equal to $H_C$. Since $C$ spans $\pi_C$, then it turns out that
     the tangent space to $W$ is constant at any point of $\pi_C$, i.e. $W$ is developable.

     When $C$ is a line, then arguing as above one finds that all the  tangent planes
     to $S$ along $C$ belong to the same $\pp^3$. This is enough to conclude
     that $S$ sits in some developable $3$-dimensional scroll.

     Conversely, if $S$ is contained in the developable scroll $W$, then at a general point
     $p$, with local coordinates $x,y$, the tangent space $t$ to $W$ at $p$ contains the
     derivatives $p, p_x, p_y, p_{xx}, p_{xy}$ (here $x$ is the direction of the tangent
     line to $C$). Thus the $\pp^4$ spanned by $t,p_{yy}$ intersects $S$ in a triple curve
     along $C$.
   \end{proof}

   \section{The Equimultiplicity Ideal}\label{sec:equimultiple}

   If $L_p$ is a curve in $|L-3p|$ we denote by $f_p\in\C\{x_p,y_p\}$ an
   equation of $L_p$ in local coordinates $x_p$ and $y_p$ at $p$.
   If $\mult_p(L_p)=3$, the ideal sheaf $\kj_{Z_p}$ whose stalk 
   at $p$ is the equimultiplicity   ideal
   \begin{displaymath}
     \kj_{Z_p,p}=\left\langle\frac{\partial
         f_p}{\partial x_p},\frac{\partial f_p}{\partial
         y_p}\right\rangle + \langle x_p,y_p\rangle^3
   \end{displaymath}
   of $f_p$ defines a zero-dimensional scheme
   $Z_p=Z_p(L_p)$ concentrated at $p$, and the tangent space
   $T_{(L_p,p)}({\kl_3})$ of $\kl_3$ at $(L_p,p)$ satisfies (see
   \cite{Mar06} Example 10)
   \begin{displaymath}
     T_{(L_p,p)}({\kl_3})\cong \big(H^0\big(S,\kj_{Z_p}(L_p)\big)/H^0(S,\ko_S)\big)\oplus\kk,
   \end{displaymath}
   where $\kk$ is zero unless $L_p$ is unitangential at $p$, in which
   case $\kk$ is a one-dimensional  vector space.

   In particular, $\kl_3$ is smooth at $(L_p,p)$ of the expected
   dimension (see \cite{Mar06} Proposition 11)
   \begin{displaymath}
     \expdim(\kl_3)=\dim|L|-4
   \end{displaymath}
   as soon as
   \begin{displaymath}
     h^1\big({S},\kj_{Z_p}(L)\big)=0.
   \end{displaymath}
   We thus have the following proposition.

   \begin{proposition}
     Let $S$ be regular and suppose that $\alpha$ is surjective,
     then $(S,L)$ is not triple-point defective if 
     \begin{displaymath}
       h^1\big({S},\kj_{Z_p}(L)\big)=0
     \end{displaymath}
     for general $p\in S$ and $L_p\in|L|$ with $\mult_p(L_p)=3$.

     Moreover, if $L$ is non-special the above $h^1$-vanishing is also
     necessary for the non-triple-point-defectiveness of $(S,L)$.
   \end{proposition}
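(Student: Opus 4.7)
The plan is to deduce both implications from the recalled tangent-space description of $\kl_3$ at $(L_p,p)$, combined with the long exact sequence in cohomology of $\kj_{Z_p}$ and a local length computation for $Z_p$.

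\textbf{Sufficiency.} Assume $h^1\bigl(S,\kj_{Z_p}(L)\bigr)=0$ for a general pair $(L_p,p)$ with $\mult_p(L_p)=3$. The cited result (\cite{Mar06}, Proposition~11) then says that $\kl_3$ is smooth of the expected dimension $\dim|L|-4$ at $(L_p,p)$. Since $\alpha$ is surjective, the fibre $|L-3p|=\alpha^{-1}(p)$ is dominated by $\kl_3^{gen}$, so $(L_p,p)\in\kl_3^{gen}$ and therefore $\dim\kl_3^{gen}=\dim|L|-4$. By Remark \ref{L3} this is precisely the non-defectiveness of $(S,L)$.

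\textbf{Necessity.} Assume now that $L$ is non-special and that $(S,L)$ is not triple-point defective, so that $\dim\kl_3^{gen}=\dim|L|-4$. At a general $(L_p,p)\in\kl_3^{gen}$ the point is smooth on $\kl_3^{gen}$; since $\kl_3^{gen}$ is the only component of $\kl_3$ dominating $S$, the other components meet $\alpha^{-1}(p)$ in a proper subvariety, so a general $(L_p,p)$ lies on no other component. Hence
\begin{displaymath}
  \dim T_{(L_p,p)}(\kl_3)=\dim\kl_3^{gen}=\dim|L|-4.
\end{displaymath}
The recalled isomorphism, together with $h^0(\ko_S)=1$, gives
\begin{displaymath}
  \dim T_{(L_p,p)}(\kl_3)=h^0\bigl(\kj_{Z_p}(L)\bigr)-1+\epsilon,
\end{displaymath}
with $\epsilon\in\{0,1\}$ equal to $1$ iff $L_p$ is unitangential at $p$, i.e.\ iff the cubic tangent cone $f_3$ of $L_p$ is the cube of a linear form. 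From the exact sequence $0\to\kj_{Z_p}(L)\to\ko_S(L)\to\ko_{Z_p}\to 0$ and $h^1(L)=0$,
\begin{displaymath}
  h^0\bigl(\kj_{Z_p}(L)\bigr)=h^0(L)-\operatorname{length}(Z_p)+h^1\bigl(\kj_{Z_p}(L)\bigr).
\end{displaymath}
A direct computation in $\C\{x_p,y_p\}/\kj_{Z_p,p}$ modulo $\langle x_p,y_p\rangle^3$ shows $\operatorname{length}(Z_p)=4$ when $\partial f_3/\partial x_p,\partial f_3/\partial y_p$ are linearly independent quadratic forms, and $\operatorname{length}(Z_p)=5$ when $f_3=\ell^3$ (the partials being then proportional to $\ell^2$). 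In both situations $\operatorname{length}(Z_p)-\epsilon=4$, so combining the three displays forces $h^1\bigl(S,\kj_{Z_p}(L)\bigr)=0$.

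The only genuine technical step is the local length computation and, in particular, the observation that the unitangential correction $\epsilon$ cancels the extra length in the $f_3=\ell^3$ case, so that the identity $\dim T_{(L_p,p)}(\kl_3)=\dim|L|-4+h^1\bigl(\kj_{Z_p}(L)\bigr)$ holds uniformly. Everything else is straightforward bookkeeping with the tangent-space formula and the long exact sequence for $\kj_{Z_p}$.
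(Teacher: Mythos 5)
Your sufficiency argument is exactly the paper's (implicit) one: the $h^1$-vanishing gives smoothness of $\kl_3$ at $(L_p,p)$ of the expected dimension $\dim|L|-4$ via the cited smoothness criterion, the irreducibility of the fibres of $\alpha$ puts a general $(L_p,p)$ on $\kl_3^{gen}$, and Remark \ref{L3} converts $\dim\kl_3^{gen}=\dim|L|-4$ into non-defectiveness. That direction is fine.

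The necessity direction has a genuine gap at the step
$\dim T_{(L_p,p)}(\kl_3)=\dim\kl_3^{gen}$. The tangent space appearing in the formula from \cite{Mar06} is the Zariski tangent space of the \emph{scheme} $\kl_3$ (cut out by the six vanishing conditions on the $2$-jet), not of the reduced variety $\kl_3^{gen}$. Generic smoothness in characteristic $0$ tells you that the reduced variety is smooth at a general point, but it does not tell you that the scheme $\kl_3$ is generically reduced along $\kl_3^{gen}$; if it is not, then $\dim T_{(L_p,p)}(\kl_3)>\dim\kl_3^{gen}$ at \emph{every} point of that component. Since your own identity reads $\dim T_{(L_p,p)}(\kl_3)=\dim|L|-4+h^1\big(\kj_{Z_p}(L)\big)$, the assertion that the tangent space has dimension $\dim|L|-4$ is equivalent to the $h^1$-vanishing you are trying to prove, so the argument is circular at precisely the decisive point. (Your careful bookkeeping with $\length(Z_p)-\epsilon=4$ is correct and consistent with Table \eqref{eq:3jets}, but it only establishes the identity above, not the bound on the tangent space.)

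The necessity direction has a short direct proof that avoids the tangent space entirely. Since $\kj_{Z_p,p}=\langle \partial f_p/\partial x_p,\partial f_p/\partial y_p\rangle+\langle x_p,y_p\rangle^3\supseteq\langle x_p,y_p\rangle^3$, the scheme $Z_p$ is a subscheme of the fat point $3p$ of length $6$, so the surjection $H^1\big(\kj_{3p}(L)\big)\twoheadrightarrow H^1\big(\kj_{Z_p}(L)\big)$ (the quotient $\kj_{Z_p}/\kj_{3p}$ being a skyscraper) gives $h^1\big(\kj_{Z_p}(L)\big)\leq h^1\big(\kj_{3p}(L)\big)$. From $0\to\kj_{3p}(L)\to\ko_S(L)\to\ko_{3p}\to 0$ one gets $h^1\big(\kj_{3p}(L)\big)-h^1(L)=6-\big(\dim|L|-\dim|L-3p|\big)$, and non-defectiveness (with $\alpha$ dominant, forcing $\dim|L|\geq 6$) together with $h^1(L)=0$ makes the right-hand side zero. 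You should replace the tangent-space argument for necessity by this, or else supply a proof that $\kl_3$ is generically reduced along $\kl_3^{gen}$.
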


   \section{The Basic Construction}\label{sec:construction}

   \medskip
   \begin{center}
     \framebox[12cm]{
       \begin{minipage}{11.3cm}
         \medskip
         \emph{From now on we assume that for $p\in S$ general
           $\exists\;L_p\in|L|$ s.t.
           $$h^1\big({S},\kj_{Z_p}(L)\big)\not=0.$$}
         \vspace*{-2ex}
       \end{minipage}
       }
   \end{center}
   \medskip

   Then by Serre's construction for a subscheme $Z'_p\subseteq Z_p$ with
   ideal sheaf $\kj_p=\kj_{Z_p'}$ of minimal length such that
   $h^1\big(S,\kj_p(L)\big)\not=0$ there is a rank two bundle $\ke_p$
   on $S$ and a section $s\in H^0(S,\ke_p)$ whose $0$-locus is $Z'_p$,
   giving the exact sequence
   \begin{equation}\label{eq:vectorbundle}
     0\rightarrow
     \ko_S\rightarrow\ke_p\rightarrow\kj_p(L-K)\rightarrow 0.
   \end{equation}
   The Chern classes of $\ke_p$ are
   \begin{displaymath}
     c_1(\ke_p)=L-K\;\;\;\mbox{ and }\;\;\;
     c_2(\ke_p)=\length(Z'_p).
   \end{displaymath}
   Moreover, $Z'_p$ is automatically a complete
   intersection.

   We would now like to understand what $\kj_p$ is depending on
   $\jet_3(f_p)$, which in suitable local coordinates will be one
   of those in Table \eqref{eq:3jets}.
   For this we first of all note that the very ample
   divisor $L$ separates all subschemes of $Z_p$ of length
   at most two. Thus $Z'_p$ has length at least $3$, and due to Lemma
   \ref{lem:ideals} below we are in one of the following situations:

   \begin{equation}\label{eq:3jets}
     \begin{array}{|c|c|c|c|c|}
       \hline
       \jet_3(f_p)&\kj_{Z_p,p}&\length(Z_p)&\kj_p&c_2(\ke_p)\\
       \hline\hline
       x_p^3-y_p^3 & \langle x_p^2,y_p^2 \rangle & 4 & \langle x_p^2,y_p^2 \rangle & 4\\\hline
       x_p^2y_p  &\langle x_p^2, x_py_p,y_p^3\rangle & 4 &\langle x_p,y_p^3 \rangle & 3\\\hline
       x_p^3 & \langle x_p^2,x_py_p^2,y_p^3\rangle & 5&\langle x_p^2,y_p^2 \rangle & 4\\\hline
       x_p^3 & \langle x_p^2,x_py_p^2,y_p^3\rangle & 5&\langle x_p,y_p^3 \rangle & 3\\\hline
     \end{array}
   \end{equation}

   \bigskip

   \begin{lemma}\label{lem:ideals}
     If $f\in R=\C\{x,y\}$ with $\jet_3(f)\in\{x^3-y^3,x^2y,x^3\}$, and if $I=\langle
     g,h\rangle\lhd R$ such that
     $\dim_\C(R/I)\geq 3$ and $\big\langle \frac{\partial f}{\partial
       x},\frac{\partial f}{\partial y}\big\rangle+\langle
     x,y\rangle^3\subseteq I$, then we may assume that we are in
     one of the following cases:
     \begin{enumerate}
     \item $I=\langle x^2,y^2\rangle$ and
       $\jet_3(f)\in\{x^3-y^3,x^3\}$, or
     \item $I=\langle x,y^3\rangle$ and $\jet_3(f)\in\{x^2y,x^3\}$.
     \end{enumerate}
   \end{lemma}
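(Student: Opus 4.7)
\emph{Proof plan.}
I would argue by case analysis on the normal form of $\jet_3(f) \in \{x^3 - y^3,\, x^2y,\, x^3\}$, reducing everything to finite-dimensional computations in the Artinian local ring $R/J_f$, where $J_f := \big\langle \frac{\partial f}{\partial x},\frac{\partial f}{\partial y}\big\rangle + \langle x,y\rangle^3$. The first observation is that $\langle x,y\rangle^3 \subseteq J_f$, so $J_f$ is determined by $\jet_3(f)$ alone; a direct computation identifies $J_f$ in each of the three cases with the corresponding ideal $\kj_{Z_p,p}$ of Table \eqref{eq:3jets}. The problem therefore becomes: classify all $2$-generated ideals $I \supseteq J_f$ with $\dim_\C(R/I) \geq 3$, up to a coordinate change that preserves the normal form of $\jet_3(f)$ (possibly up to a nonzero scalar).

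For this I would compute the socle of each $R/J_f$ and parametrise the overideals $I$ by their image $I/J_f$. One finds this socle to be $\langle xy\rangle$ for $J_f = \langle x^2,y^2\rangle$, $\langle x, y^2\rangle$ for $J_f = \langle x^2, xy, y^3\rangle$, and $\langle xy, y^2\rangle$ for $J_f = \langle x^2, xy^2, y^3\rangle$. A colength $\geq 3$ overideal is a finite-dimensional extension of $J_f$ inside $R/J_f$, and Nakayama applied to $I/\langle x,y\rangle I$ lets me discard every candidate requiring three or more minimal generators. In the $x^3 - y^3$ case this forces $I = \langle x^2, y^2\rangle$ outright; in the $x^2 y$ case the surviving candidates are of the form $J_f + \langle \lambda x + \mu y^2\rangle$; and in the $x^3$ case I obtain a family $J_f + \langle \lambda xy + \mu y^2\rangle$ together with a separate ``multiplicity-one'' family in which $I$ contains an element of $\langle x,y\rangle \setminus \langle x,y\rangle^2$.

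The final step is to normalise each surviving candidate by an explicit coordinate change. For $\jet_3(f) = x^2 y$, the substitution $x \mapsto \lambda x + y^2$ preserves the $3$-jet up to a nonzero scalar and sends $J_f + \langle \lambda x + y^2\rangle$ to $\langle x, y^3\rangle$. For the mixed-socle case with $\jet_3(f) = x^3$ and both $\lambda, \mu \neq 0$, completing the square via $y \mapsto y - \lambda x/(2\mu)$ eliminates the $xy$-coefficient and yields $\langle x^2, y^2\rangle$; the fact that $\jet_3(f) = x^3$ does not involve $y$ is what makes this shear harmless. For the multiplicity-one sub-case of $\jet_3(f) = x^3$, Weierstrass preparation with respect to a generator $g \in \langle x,y\rangle \setminus \langle x,y\rangle^2$ identifies $R/\langle g\rangle$ with $\C\{y\}$, in which the image of $J_f$ is $\langle y^3\rangle$; the colength condition then forces the second generator to have order exactly $3$ in this quotient, and a final coordinate change produces $I = \langle x, y^3\rangle$. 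The main obstacle I expect is the bookkeeping of the coordinate changes in the $\jet_3(f) = x^3$ case, where the stabiliser of the $3$-jet is large and one must absorb a possible $y$-linear component of $g$ and higher-order corrections in $h$ simultaneously while keeping $\jet_3(f)$ in the listed set.
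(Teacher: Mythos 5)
Your plan is correct, and it reaches the classification by a genuinely different route from the paper. The paper fixes a local degree ordering, observes that $I$ and its leading ideal $L_>(I)$ share the same Hilbert--Samuel function, lists the five monomial ideals containing $\langle x^2,xy^2,y^3\rangle$ of colength $\geq 3$, kills two of them because their Hilbert--Samuel function is not that of a complete intersection (Iarrobino), and then normalises generators case by case according to $L_>(I)$. You instead work inside the Artinian quotient $R/J_f$, organised by the $3$-jet of $f$: your socle computations are all correct ($\langle xy\rangle$, $\langle x,y^2\rangle$, $\langle xy,y^2\rangle$ respectively), a one-dimensional $I/J_f$ is necessarily spanned by a socle element by Nakayama, and counting minimal generators via $\dim_\C I/\langle x,y\rangle I$ replaces the paper's appeal to Iarrobino's theorem in discarding $\langle x^2,xy,y^2\rangle$, $\langle x^2,xy,y^3\rangle$ (as an actual ideal), and $J_f$ itself. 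The surviving families and their normalisations (the shear $y\mapsto y-\lambda x/(2\mu)$ being harmless because $x^3$ does not involve $y$, the substitution $\tilde x=\lambda x+\mu y^2$ rescaling $x^2y$) match the paper's conclusions, and the whole argument is more elementary and self-contained than the paper's, at the cost of a slightly longer enumeration of overideals. One small point to make explicit in the multiplicity-one subcase of $\jet_3(f)=x^3$: before identifying $R/\langle g\rangle$ with $\C\{y\}$ you must rule out that the linear part of $g$ is proportional to $y$ (equivalently, that the $y$-coefficient is nonzero); this follows because otherwise the image of $x^2$ in the quotient has order $\leq 2$, forcing $\dim_\C(R/I)\leq 2$ --- it is the same observation the paper makes up front with the $y>x$ ordering, and your sketch currently assumes it silently.
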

   \begin{proof}
     If $>$ is any \emph{local degree} ordering on $R$, then the
     Hilbert-Samuel functions of $R/I$ and of $R/L_>(I)$ coincide,
     where $L_>(I)$ denotes the leading ideal of $I$ (see
     e.g. \cite{GrP02} Proposition 5.5.7). In particular,
     $\dim_\C(R/I)=\dim_\C(R/L_>(I))$ and thus
     \begin{displaymath}
       L_>(I)\in\big\{\langle x^2,xy^2,y^3\rangle,\langle x^2,xy,y^2\rangle,\langle
       x^2,xy,y^3\rangle,\langle x^2,y^2\rangle,\langle
       x,y^3\rangle\},
     \end{displaymath}
     since $\langle x^2,xy^2,y^3\rangle\subset I$.

     Taking $>$, for a moment, to be the local
     degree ordering on $R$ with $y>x$ we deduce at once that $I$ does
     not contain any power series with a linear term in $y$. For the
     remaining part of the proof $>$ will be the local degree ordering
     on $R$ with $x>y$.

     \underline{1st Case:} $L_>(I)=\langle x^2,xy^2,y^3\rangle$ or 
    $L_>(I)=\langle x^2,xy,y^2\rangle$.
       Thus the graph of the slope $H^0_{R/I}$ of the Hilbert-Samuel  function of $R/I$
       would be as shown in Figure \ref{fig:fp-histogram}, which
       contradicts the fact that $I$ is a complete intersection due to
       \cite{Iar77} Theorem 4.3.
     \begin{figure}[h]
       \smallskip
       \begin{center}
         \begin{texdraw}
           \drawdim cm
           \setunitscale 0.6
           \arrowheadtype t:F
           \arrowheadsize l:0.5 w:0.3
           \move (0 0)
           \rlvec (0 1) \rlvec (1 0)
           \rlvec (0 1) \rlvec (2 0)
           \rlvec (0 -2)
           \move (0 0) \avec (0 3)
           \move (0 0) \avec (4 0)
           \move (0 0)
           \rmove (3 -0.6) \textref h:C v:T \htext{$3$}
           \move (-0.8 2)  \textref h:R v:C \htext{$2$}
         \end{texdraw}
         \hspace*{2cm}
         \begin{texdraw}
           \drawdim cm
           \setunitscale 0.6
           \arrowheadtype t:F
           \arrowheadsize l:0.5 w:0.3
           \move (0 0)
           \rlvec (0 1) \rlvec (1 0)
           \rlvec (0 1) \rlvec (1 0)
           \rlvec (0 -2)
           \move (0 0) \avec (0 3)
           \move (0 0) \avec (4 0)
           \move (0 0)
           \rmove (2 -0.6) \textref h:C v:T \htext{$2$}
           \move (-0.8 2)  \textref h:R v:C \htext{$2$}
         \end{texdraw}
         \medskip
         \caption{The graphs of $H^0_{R/\langle x^2,xy^2,y^3\rangle}$
           respectively of $H^0_{R/\langle x^2,xy,y^2\rangle}$.}
         \label{fig:fp-histogram}
       \end{center}
     \end{figure}

     \underline{2nd Case:} $L_>(I)=\langle x^2,xy,y^3\rangle$.
     Then we may assume
     \begin{displaymath}
       g=x^2+\alpha\cdot y^2+h.o.t.\;\;\;\mbox{ and }\;\;\; h=xy+\beta\cdot y^2+h.o.t..
     \end{displaymath}
     Since $x^2\in I$ there are power series $a,b\in R$ such that
     \begin{displaymath}
       x^2=a\cdot g+b\cdot h.
     \end{displaymath}
     Thus the leading monomial of $a$ is one, $a$ is a unit and
     $g\in\langle x^2,h\rangle$. We may therefore assume that
     $g=x^2$. Moreover, since the intersection multiplicity of $g$ and
     $h$ is $\dim_\C(R/I)=4$, $g$ and $h$ cannot have a common tangent
     line in the origin, i.\ e.\ $\beta\not=0$. Thus, since $g=x^2$,
     we may assume that $h=xy+y^2\cdot u$ with $u=\beta+h.o.t$ a unit.

     In new coordinates $\widetilde{x}=x\cdot\sqrt{u}$ and
     $\widetilde{y}=y\cdot\frac{1}{\sqrt{u}}$ we have
     \begin{displaymath}
       I=\langle \widetilde{x}^2,\widetilde{x}\widetilde{y}+\widetilde{y}^2\rangle.
     \end{displaymath}
     Note that by the coordinate change $\jet_3(f)$ only changes by a constant,
     that $\frac{\partial f}{\partial \widetilde{x}},\frac{\partial
       f}{\partial \widetilde{y}}\in I$ and that $\langle
     \widetilde{x},\widetilde{y}\rangle^3\subset I$, but
     $\widetilde{x}\widetilde{y},\widetilde{y}^2\not\in I$. Thus
     $\jet_3(f)=x^3$.

     Setting now $\bar{x}=\widetilde{x}$ and
     $\bar{y}=\widetilde{x}+2\widetilde{y}$, then
     $\bar{y}^2=\widetilde{x}^2+4\cdot
     (\widetilde{x}\widetilde{y}+\widetilde{y}^2)\in I$ and thus,
     considering colengths,
     \begin{displaymath}
       I=\langle \bar{x}^2,\bar{y}^2\rangle.
     \end{displaymath}
     Moreover, the $3$-jet of $f$ does not change with respect
     to the new coordinates, so that we may assume we worked with
     these from the beginning.

     \underline{3rd Case:} $L_>(I)=\langle x^2,y^2\rangle$.
     Then we may assume
     \begin{displaymath}
       g=x^2+\alpha\cdot xy+h.o.t.\;\;\;\mbox{ and }\;\;\; h=y^2+h.o.t.
     \end{displaymath}
     As in the second case we deduce that w.l.o.g.\ $g=x^2$ and
     thus $h=y^2\cdot u$, where $u$ is a unit. But then $I=\langle
     x^2,y^2\rangle$.

     \underline{4th Case:} $L_>(I)=\langle x,y^3\rangle$.
     Then we may assume
     \begin{displaymath}
       g=x+h.o.t.\;\;\;\mbox{ and }\;\;\; h=y^3+h.o.t.
     \end{displaymath}
     since there is no power series in $I$ involving a linear term in
     $y$. In new coordinates $\widetilde{x}=g$ and
     $\widetilde{y}=y$ we have
     \begin{displaymath}
       I=\big\langle \widetilde{x},\widetilde{h}\big\rangle,
     \end{displaymath}
     and we may assume that $\widetilde{h}=\widetilde{y}^3\cdot u$, where $u$ is a
     unit only depending on $\widetilde{y}$. Hence, $I=\langle
     \widetilde{x},\widetilde{y}^3\rangle$. Moreover, the $3$-jet
     of $f$ does not change with respect
     to the new coordinates, so that we may assume we worked with
     these from the beginning.
   \end{proof}

   \medskip
   \begin{center}
     \framebox[11cm]{
       \begin{minipage}{10cm}
         \medskip
         From now on we assume that $(L-K)^2>16$.
         \medskip
       \end{minipage}
       }
   \end{center}
   \bigskip

   Thus
   \begin{displaymath}
     c_1(\ke_p)^2-4\cdot c_2(\ke_p)>0,
   \end{displaymath}
   and hence $\ke_p$ is Bogomolov unstable. The Bogomolov instability
   implies the existence of a unique divisor $A_p$ which destabilizes
   $\ke_p$. (See e.\ g.\ \cite{Fri98} Section 9, Corollary 2.) In
   other words, setting $B_p=L-K-A_p$, i.\ e.\
   \begin{equation}
     \label{eq:AB:0}
     A_p+B_p=L-K,
   \end{equation}
   there is an immersion
   \begin{equation}
     0\rightarrow\ko_S(A_p)\rightarrow\ke_p
   \end{equation}
   where $(A_p-B_p)^2\geq c_1(\ke_p)^2-4\cdot c_2(\ke_p)>0$ and 
    $(A_p-B_p).H>0$ for every ample $H$. From
   this we deduce the following properties:
   \begin{enumerate}
   \item $\ke_p(-A_p)$ has a section that vanishes along a
     subscheme $\widetilde{Z}_p$ of codimension $2$, and
     \begin{equation}
       \label{eq:AB:2}
       A_p.B_p\leq\length(Z'_p).
     \end{equation}
    The previous immersion gives rise to a short exact sequence:
  \begin{equation}\label{eq:AB:1}
     0\rightarrow\ko_S(A_p)\rightarrow\ke_p\rightarrow\kj_{\widetilde{Z}_p}(B_p)\rightarrow 0.
   \end{equation}
   \item The divisor $B_p$ is effective and we may assume that
     $Z'_p\subset B_p$.
   \item $A_p-B_p$ is big, and hence
     $\dim\big(|k\cdot(A_p-B_p)|\big)=const\cdot k^2+o(k)>0$
     for $k>>0$. In particular
     \begin{equation}
       \label{eq:AB:3}
       (A_p-B_p).M>0
     \end{equation}
     if $M$ is big and nef or if $M$ is an irreducible curve with
     $M^2\geq 0$ or if $M$ is effective without fixed part.
   \item $A_p$ is big.
   \end{enumerate}
   \begin{proof}
     \begin{enumerate}
     \item Sequence \eqref{eq:AB:1} is a consequence of Serre's construction\tom{ 
    (see \cite{Fri98} loc. cit.)}.
    The first assertion now follows from Sequence
       \eqref{eq:AB:1}, and Equation \eqref{eq:AB:2} is a
       consequence of
       \begin{displaymath}
         (A_p-B_p)^2\geq c_1(\ke_p)^2-c_2(\ke_p)=(A_p+B_p)^2-4\cdot\length(Z'_p).
       \end{displaymath}
     \item Observe that $\big(2A_p-(L-K)\big).H>0$ for any ample line
       bundle $H$, and thus
       \begin{displaymath}
         -A_p.H<-\frac{(L-K_{S}).H}{2}< 0.
       \end{displaymath}
       Thus $H^0\big(\ko_S(-A_p)\big)=0$ and twisting the sequence
       \eqref{eq:vectorbundle} with $-A_p$ we are done.
     \item Since $(A_p-B_p)^2>0$ and $(A_p-B_p).H>0$ for some ample
       $H$ Riemann-Roch shows that $A_p-B_p$ is big, i.\ e.\
       $\dim\big(|k\cdot(A_p-B_p)|\big)$ grows with $k^2$. The
       remaining part follows from Lemma \ref{lem:big}.
     \item This follows since $A_p-B_p$ is big and $B_p$ is effective.
     \end{enumerate}
   \end{proof}

   \begin{lemma}\label{lem:big}
     Let $R$ be a big divisor. If $M$ is big and nef or if $M$ is an irreducible curve with
     $M^2\geq 0$ or if $M$ is an effective divisor without fixed component, then $R.M>0$.
   \end{lemma}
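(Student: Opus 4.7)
The natural approach is to invoke Kodaira's lemma: since $R$ is big on the surface $S$, some positive multiple $nR$ is linearly equivalent to $H+E$, where $H$ is an ample integral divisor and $E$ is an effective divisor. Then
\begin{displaymath}
  n\cdot R.M \;=\; H.M + E.M,
\end{displaymath}
so it suffices to verify $H.M>0$ together with $E.M\geq 0$ in each of the three cases; dividing by $n$ gives the conclusion.

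The inequality $H.M>0$ is the easy half. In case (a), $M$ big and nef, $M$ is a non-trivial class, and applying Kodaira's lemma again to $M$ writes some multiple of $M$ as ample plus effective, whose intersection with $H$ is strictly positive. In cases (b) and (c), $M$ is an effective non-zero divisor, so $H.M>0$ follows at once from the ampleness of $H$ (equivalently, Nakai--Moishezon).

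The non-negativity $E.M\geq 0$ requires a case split. If $M$ is nef, then $E.M\geq 0$ is immediate since $E$ is effective. If $M$ is an irreducible curve with $M^2\geq 0$, decompose $E = aM + E'$ with $a\geq 0$ and $E'$ effective not containing $M$ as a component; then $E'.M\geq 0$ because the intersection is supported on a proper subscheme, and $aM^2\geq 0$ by hypothesis, so $E.M\geq 0$. Finally, if $|M|$ has no fixed component, replace $M$ by a general member $M'\in|M|$, which is linearly equivalent to $M$; none of the finitely many irreducible components of $E$ can be a fixed component of $|M|$, so a general $M'$ shares no component with $E$, whence $E.M = E.M'\geq 0$. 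No step presents a serious obstacle; the point to watch is the last case, where one must observe that avoiding finitely many irreducible curves as components of a general member of $|M|$ is exactly the content of ``no fixed component''.
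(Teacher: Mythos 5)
Your proof is correct and follows essentially the same route as the paper's: both decompose a multiple of the big divisor $R$ as ample plus effective (you via Kodaira's lemma, the paper via the fixed-part decomposition of $|kR|$ together with Nakai--Moishezon) and then run the same three-case verification that the ample part meets $M$ positively and the effective part meets it non-negatively. The only minor divergence is in the fixed-component-free case, where the paper reduces to the irreducible-curve case component by component, while you instead move $M$ to a general member sharing no component with the effective part; both variants work.
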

   \begin{proof}
     If $R$ is big, then $\dim|k\cdot R|$ grows with $k^2$.
     Thus for
     $k>>0$ we can write $k\cdot R=N'+N''$ where $N'$ is ample
     and $N''$ effective (possibly zero). To see this, note that
     for $k>>0$ we can write $|k\cdot R|=|N'|+N''$, where $N''$ is the
     fixed part of $|k\cdot R|$ and $N'\cap C\not=\emptyset$ for every
     irreducible curve $C$. Then apply the Nakai-Moishezon Criterion
     to $N'$ (see also \cite{Tan04}). 

     Analogously, if $M$ is big and nef, for
     $j>>0$ we can write $j\cdot M=M'+M''$ where $M'$ is ample and
     $M''$ is effective. Therefore,
     \begin{displaymath}
       R.M=\frac{1}{kj}\cdot \big(N'.M'+N'.M''+N''.M)>0,
     \end{displaymath}
     since $N'.M'>0$, $N'.M''\geq 0$ and $N''.M\geq 0$.

     Similarly, if $M$ is irreducible and has non-negative
     self-inter\-section, then
     \begin{displaymath}
       R.M=\frac{1}{k}\cdot(N'.M+N''.M)>0.
     \end{displaymath}

     And if $M$ is effective without fixed component, we can apply the
     previous argument to every component of $M$.
   \end{proof}

   Now let $p$ move freely in $S$. Accordingly the scheme
   $Z'_p$ moves, hence the effective divisor $B_p$ containing $Z'_p$
   moves in an algebraic family $\kb\subseteq |B|_a$ which is the
   closure of $\{B_p\;|\;p\in S, L_p\in|L-3p|, \mbox{ both general}\}$
   and which covers $S$. 
   A priori this family $\kb$ might
   have a \emph{fixed part} $C$, so that for general $p\in S$ there is an
   effective divisor $D_p$ moving in a fixed-part free algebraic
   family $\kd\subseteq |D|_a$  such that
   \begin{displaymath}
     B_p=C+D_p.
   \end{displaymath}
   Whenever we only refer to the algebraic class of $A_p$ respectively
   $B_p$ respectively $D_p$ we will write $A$ respectively $B$
   respectively $D$ for short.

   \medskip
   \begin{center}
     \framebox[11cm]{
       \begin{minipage}{10cm}
         \medskip
         For these considerations we assume, of course, that $\length(Z'_p)$ is constant for
         $p\in S$ general, so either $\length(Z'_p)=3$ or $\length(Z'_p)=4$.
         \medskip
       \end{minipage}
       }
   \end{center}
   \bigskip

   \section{$C=0$.}\label{sec:zero}

   Our first aim is to show that actually $C=0$ (see Lemma
   \ref{lem:C}). But in order to do so
   we first have to consider the
   boundary case that $A_p.B_p=\length(Z_p')$.

   \begin{proposition}\label{prop:splitting}
     If $A_p.B_p=\length(Z'_p)$, then
     there exists a non-trivial global section $0\not=s\in
     H^0\big(B_p,\kj_{Z_p'/B_p}(A_p)\big)$ whose zero-locus is
     $Z_p'$.

     In particular, $A_p.D_p=A_p.B_p=\length(Z_p')$ and $A_p.C=0$.
   \end{proposition}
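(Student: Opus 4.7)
The plan is to exploit the equality $A_p.B_p=\length(Z'_p)$, which forces $(A_p-B_p)^2=c_1(\ke_p)^2-4c_2(\ke_p)$ and so puts $\ke_p$ at the boundary of Bogomolov instability. In this boundary situation the destabilizing sequence \eqref{eq:AB:1} should be tight enough that the Serre section $s_0\in H^0(\ke_p)$ with $Z(s_0)=Z'_p$ can be ``read off'' on the curve $B_p$ as a section of the line bundle $\ko_{B_p}(A_p)$, and a degree count will then force its zero scheme to coincide with $Z'_p$.

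Concretely, I would first verify that the image $t$ of $s_0$ under the right-hand map in \eqref{eq:AB:1} is nonzero; otherwise $s_0$ would factor through $\ko_S(A_p)\hookrightarrow\ke_p$, producing a nonzero section of $\ko_S(-A_p)$ and contradicting the bigness of $A_p$ from property (d). After identifying $B_p$ with the divisor cut out by $t$, I restrict \eqref{eq:AB:1} to $B_p$. Since $\kj_{\widetilde{Z}_p}(B_p)$ is torsion-free, multiplication by a local equation of $B_p$ is injective on it, so $\mathrm{Tor}_1^{\ko_S}(\kj_{\widetilde{Z}_p}(B_p),\ko_{B_p})=0$ and one obtains a short exact sequence
\begin{displaymath}
0\longrightarrow \ko_{B_p}(A_p)\longrightarrow \ke_p|_{B_p}\longrightarrow \kj_{\widetilde{Z}_p}(B_p)|_{B_p}\longrightarrow 0.
\end{displaymath}
Because $t|_{B_p}=0$, the restriction $s_0|_{B_p}$ lies in the image of $\ko_{B_p}(A_p)$ and lifts uniquely to $\bar s_0\in H^0(B_p,\ko_{B_p}(A_p))$. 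This $\bar s_0$ is nonzero, since otherwise $s_0$ would vanish on the whole one-dimensional $B_p$, contradicting $Z(s_0)=Z'_p$. Tracing through the construction, the image of $\bar s_0$ in $\ke_p|_{Z'_p}$ agrees with the image of $s_0$, which is zero because $s_0$ vanishes on $Z'_p\subset B_p$; hence $\bar s_0\in H^0\bigl(B_p,\kj_{Z'_p/B_p}(A_p)\bigr)$. The hypothesis now gives $\deg\ko_{B_p}(A_p)=A_p.B_p=\length(Z'_p)$, so the zero scheme of $\bar s_0$ on $B_p$ has length $\length(Z'_p)$ and contains $Z'_p$ of the same length, hence equals $Z'_p$ scheme-theoretically.

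For the ``in particular'' part, write $B_p=C+D_p$ with $C$ the fixed part of the family $\kb$. Since $p$ is general it does not lie on $C$, so $Z'_p$, supported at $p$, is disjoint from $C$. For each irreducible component $C_i$ of $C$ the restriction $\bar s_0|_{C_i}$ is nonzero (else $\bar s_0$ would vanish identically on $C_i$, contradicting the zero-dimensionality of $Z(\bar s_0)=Z'_p$) and nowhere-vanishing (any zero on $C_i$ would sit in $Z'_p\cap C_i=\emptyset$). A nowhere-vanishing section of a line bundle on a complete curve forces the degree to be zero, so $A_p.C_i=0$ for every $i$; summing gives $A_p.C=0$, and therefore $A_p.D_p=A_p.B_p=\length(Z'_p)$. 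The only non-formal point in this plan is the Tor-vanishing that turns the restriction of \eqref{eq:AB:1} to $B_p$ into a short exact sequence with injective left-hand arrow — this is what upgrades $s_0|_{B_p}$ to a genuine global section of a line bundle on $B_p$ and, together with the boundary equality $A_p.B_p=\length(Z'_p)$, produces the scheme-theoretic coincidence of zero loci.
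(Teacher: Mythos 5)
Your argument is essentially the paper's proof in a different packaging: the paper merges \eqref{eq:vectorbundle}, \eqref{eq:AB:1} and the structure sequence of $B_p$ into a $3\times 3$ diagram to conclude $\ko_{B_p}\cong\kj_{Z_p'/B_p}(A_p)$, while you restrict \eqref{eq:AB:1} to $B_p$, lift $s_0|_{B_p}$ to $\bar s_0\in H^0(B_p,\ko_{B_p}(A_p))$, and close with a degree count; these are equivalent. Three points need repair or emphasis. First, your justification that $t\neq 0$ is backwards: if $s_0$ factored through $\ko_S(A_p)\hookrightarrow\ke_p$ you would obtain a nonzero map $\ko_S\to\ko_S(A_p)$, i.e.\ a section of $\ko_S(A_p)$, not of $\ko_S(-A_p)$, and that contradicts nothing about bigness. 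The correct contradiction is that such a factorisation would force the zero scheme $Z(s_0)=Z_p'$ to contain the divisor of that section; since $Z_p'$ is zero-dimensional that divisor is trivial, whence $A_p\sim 0$, contradicting $A_p^2>0$. Second, you should make explicit that the hypothesis $A_p.B_p=\length(Z_p')$ together with $c_2(\ke_p)=A_p.B_p+\length\big(\widetilde{Z}_p\big)$ forces $\widetilde{Z}_p=\emptyset$ (this is the paper's opening step); your claim $t|_{B_p}=0$ really uses this, since for $\widetilde{Z}_p\neq\emptyset$ the image of $t$ in $\kj_{\widetilde{Z}_p}(B_p)\otimes\ko_{B_p}$ could be a nonzero torsion element even though its image in $\ko_{B_p}(B_p)$ vanishes. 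Third, for the degree count you need $\bar s_0$ not to vanish identically on any component of $B_p$, not merely $\bar s_0\neq 0$; this follows by the same observation you already make, since $s_0$ would otherwise vanish along a curve. With these repairs the proof is complete, and your treatment of the fixed part $C$ via nowhere-vanishing restrictions on its components is a clean equivalent of the paper's restriction of $s$ to $D_p$.
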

   \begin{proof}
     By Equation \eqref{eq:AB:1} we have
     \begin{displaymath}
       A_p.B_p=\length(Z'_p)=c_2(\ke_p)=A_p.B_p+\length\big(\widetilde{Z}_p\big).
     \end{displaymath}
     Thus $\widetilde{Z}_p=\emptyset$.

     If we  merge the sequences \eqref{eq:vectorbundle},
     \eqref{eq:AB:1}, and the structure sequence of $B$ twisted by $B$
     we obtain the following exact commutative diagram in Figure~\ref{fig:AB},
     \begin{figure}[h]
       \begin{displaymath}
         \xymatrix{
           && 0 \ar[d] & 0 \ar[d] &\\
           &0\ar[r]\ar[d]& \ko_S \ar[r]\ar[d] & \ko_S \ar[r]\ar[d] &0 \\
           0\ar[r]& \ko_S(A_p) \ar[r]\ar[d] & \ke_p \ar[r]\ar[d]&\ko_S(B_p)\ar[r]\ar[d] &0 \\
           0\ar[r]& \ko_S(A_p) \ar[r]\ar[d] & \kj_{Z_p'/S}(A_p+B_p) \ar[r]\ar[d]&\ko_{B_p}(B_p)\ar[r]\ar[d] &0 \\
           &0&0&0
         }
       \end{displaymath}\centering

       \caption{The diagram showing $\ko_{B_p}=\kj_{Z_p'/B_p}(A_p)$.}
       \label{fig:AB}
     \end{figure}
     where $\ko_{B_p}(B_p)=\kj_{Z_p'/B_p}(A_p+B_p)$, or equivalently
     $\ko_{B_p}=\kj_{Z_p'/B_p}(A_p)$.
     Thus from the rightmost column we get a non-trivial global
     section, say $s$, of this bundle 
     which vanishes precisely at $Z_p'$,
     since $Z_p'$ is the zero-locus of the monomorphism of vector
     bundles $\ko_S\hookrightarrow \ke_p$. However, since $p$ is
     general we have that $p\not\in C$ and thus the restriction
     $0\not=s_{|D_p}\in H^0\big(D_p,\kj_{Z_p'/D_p}(A_p)\big)$ and it still vanishes
     precisely at $Z_p'$. Thus $A_p.D_p=\length(Z_p')=A_p.B_p$, and $A_p.C=A_p.B_p-A_p.D_p=0$.
   \end{proof}

       




   \begin{lemma}\label{lem:AB}
     $A_p.B_p\geq 1+B_p^2$.
   \end{lemma}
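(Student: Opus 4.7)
The plan is to split into two cases according to the sign of $B_p^2$ and, in the positive case, to invoke the Hodge Index Theorem, leveraging the Bogomolov data already established in \S\ref{sec:construction}: namely $A_p^2 > 0$ (since $A_p$ is big on a surface), $(A_p - B_p).H > 0$ for every ample $H$, and $A_p + B_p = L - K$ very ample.

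First I would observe that $(L-K).B_p \geq 1$ because $L-K$ is very ample and $B_p$ is a non-zero effective divisor, so rearranging via $A_p.B_p = (L-K).B_p - B_p^2$ immediately yields $A_p.B_p \geq 1 - B_p^2$. This already settles the case $B_p^2 \leq 0$: then $1 - B_p^2 \geq 1 + B_p^2$, and the lemma needs no further input than very ampleness.

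For the case $B_p^2 > 0$ I would apply Hodge Index to the pair $(A_p, B_p)$. Specializing $(A_p - B_p).H > 0$ to the ample class $H = L-K = A_p + B_p$ gives $A_p^2 - B_p^2 > 0$, hence $A_p^2 \geq B_p^2 + 1$ by integrality. Since $A_p^2 > 0$, Hodge Index then yields
\[
(A_p.B_p)^2 \;\geq\; A_p^2 \cdot B_p^2 \;\geq\; (B_p^2 + 1)\cdot B_p^2 \;>\; (B_p^2)^2,
\]
so $|A_p.B_p| \geq B_p^2 + 1$. The positivity $A_p.B_p + B_p^2 = (L-K).B_p > 0$ excludes the negative branch $A_p.B_p \leq -B_p^2 - 1$, leaving $A_p.B_p \geq 1 + B_p^2$, as required.

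The main obstacle I anticipate is fixing the sign in the Hodge-Index step, since Hodge only controls $|A_p.B_p|$. It is resolved cleanly by the identity $(L-K).B_p = A_p.B_p + B_p^2$ together with the very ampleness of $L-K$, which forbids the negative alternative. Notably, neither the decomposition $B_p = C + D_p$, nor the fixed-part-free family $\kd$, nor the boundary analysis of Proposition \ref{prop:splitting} are required for this particular lemma; they enter only in the subsequent step of ruling out $C \neq 0$.
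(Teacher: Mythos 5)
Your argument is correct, but it reaches the conclusion by a genuinely different route than the paper. The paper's proof takes a Zariski decomposition $B=P+N$ and treats the cases $N\neq 0$ and $N=0$ separately, using the very ampleness of $A+B$ against the negative part $N$, the nefness of $P$ against the big class $A-B$, and (when $B$ is big and nef) the inequality \eqref{eq:AB:3}; it never invokes the Hodge Index Theorem. You instead split on the sign of $B_p^2$: for $B_p^2\leq 0$ the single inequality $(L-K).B_p\geq 1$ (very ampleness on the non-zero effective divisor $B_p$) already suffices, and for $B_p^2>0$ you combine $A_p^2>B_p^2>0$ (from $(A_p-B_p).(A_p+B_p)>0$, which is the destabilizing condition tested against the ample class $L-K$) with Hodge Index and integrality, fixing the sign of $A_p.B_p$ again by very ampleness. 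Your version is leaner in its inputs --- it needs neither the Zariski decomposition nor Lemma \ref{lem:big} nor property \eqref{eq:AB:3}, only the raw Bogomolov data and very ampleness --- at the cost of importing the Hodge Index Theorem, which the paper reserves for Propositions \ref{prop:length4} and \ref{prop:length3}. One small caution: your parenthetical justification ``$A_p^2>0$ since $A_p$ is big'' is not a valid implication on its own (a big divisor on a surface can have non-positive self-intersection), but this does not damage the proof, because in the only case where you use Hodge Index you derive $A_p^2\geq B_p^2+1>0$ directly from $(A_p-B_p).(A_p+B_p)>0$; I would simply delete that parenthetical so the argument rests on what you actually prove.
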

   \begin{proof}
     Let $B=P+N$ be a Zariski decomposition of $B$, i.\ e.\ $P$ and
     $N$ are effective $\Q$-divisors such that in particular $P$ is nef, $P.N=0$
     and $N^2<0$ unless $N=0$.

     If $N\not=0$, then
     \begin{displaymath}
       0<(A+B).N=A.N+N^2,
     \end{displaymath}
     since $A+B$ is very ample and $N$ is effective. Moreover, since $P$
     is nef and $A-B$ big we have $(A-B).P\geq 0$ and hence
     \begin{displaymath}
       A.P\geq B.P=P^2.
     \end{displaymath}
     Combining these two inequalities we get
     \begin{displaymath}
       A.B=A.P+A.N>P^2-N^2>P^2+N^2=B^2.
     \end{displaymath}

     If $N=0$, then $B$ is nef and, therefore, $B^2\geq 0$. If,
     actually $B^2>0$, then $B$ is big and nef, so that by
     \eqref{eq:AB:3} $(A-B).B>0$. While if $B^2=0$ then
     \begin{displaymath}
       B^2=0<B.(A+B)=A.B
     \end{displaymath}
     since $A+B$ is very ample and $B$ is effective.
   \end{proof}

   \begin{lemma}\label{lem:DD}
     Let $p\in S$ be general and suppose $\length(Z'_p)=4$.
     \begin{enumerate}
     \item If $D_p$ is irreducible, then $\dim(\kd)\geq 2$ and
       $D_p^2\geq 3$.
     \item If $D_p$ is reducible but the part containing $p$ is reduced,
       then
       either $D_p$ has a component singular in $p$ and $D_p^2\geq 5$
       or at least two components of $D_p$ pass through $p$ and
       $D_p^2\geq 2$.
     \item If $D_p^2\leq 1$, then $D_p=k\cdot E_p$ where $k\geq 2$,
       $E_p$ is irreducible and $E_p^2=0$. In particular, $D_p^2=0$.
     \end{enumerate}
   \end{lemma}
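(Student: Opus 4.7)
The proof treats the three assertions in turn. Throughout, the hypothesis $\length(Z'_p)=4$ together with Lemma~\ref{lem:ideals} and Table~\eqref{eq:3jets} forces $\kj_p=\langle x_p^2,y_p^2\rangle$, so any divisor containing $Z'_p$ has multiplicity at least $2$ at $p$ with tangent cone in the pencil spanned by $x_p^2$ and $y_p^2$. Since $p\notin C$ for generic $p$ and $Z'_p\subset B_p=C+D_p$, we have $\mult_p(D_p)\geq 2$ with tangent cone of this specific shape.

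For part~(a), assume $D_p$ is irreducible. The upper bound $\dim\kd\leq 2$ is automatic because $\kd$ is dominated by $S$ via $p\mapsto D_p$. For the lower bound, if $\dim\kd\leq 1$ then a generic $D\in\kd$ would equal $D_p$ for $p$ in a positive-dimensional subset of $S$, each such $p$ being a singular point of $D$, contradicting the $0$-dimensionality of the singular locus of an irreducible curve; hence $\dim\kd=2$. To establish $D_p^2\geq 3$, I would argue by contradiction: suppose $D^2\leq 2$. Fix generic $p_0\in S$ and consider the $1$-dimensional locus $V_{p_0}=\{p'\in S:p_0\in D_{p'}\}$. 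As $p'$ varies in $V_{p_0}$, $D_{p'}$ has a smooth local branch at $p_0$ whose tangent line defines a rational map $V_{p_0}\to\pp(T_{p_0}S)\cong\pp^1$, which one shows to be dominant using the non-degeneracy of $\kd$. Hence there exists $p'\in V_{p_0}$ with this tangent line coinciding with one of the two tangent directions of $D_{p_0}$ at $p_0$, and a direct local computation (with the equation of $D_{p_0}$ lying in $\langle x^2,y^2\rangle$ and $D_{p'}$ smooth through $p_0$ with matching tangent) gives $i_{p_0}(D_{p_0},D_{p'})\geq 3$, whence $D^2\geq 3$, contradicting the assumption.

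For part~(b), write $D_p=D'_p+D''_p$, with $D'_p$ collecting the components of $D_p$ through $p$ and $D''_p$ the remaining ones. Since $D_p$ contains $Z'_p$ and is reduced at $p$, one has $\mult_p(D'_p)=2$, so either $D'_p$ is a single irreducible component singular at $p$ (first subcase) or $D'_p$ has two distinct irreducible components $E_1, E_2$ passing through $p$ (second subcase). In the first subcase, $D'_p$ itself satisfies the hypotheses of part~(a), giving $(D'_p)^2\geq 3$; since reducibility forces $D''_p\neq 0$ and $D'_p$ moves in a dominant $2$-dimensional family, one shows $D'_p\cdot D''_p\geq 1$ (if it were zero, the covering property of the $D'_p$ family combined with numerical equivalence would force $D'_p$ and $D''_p$ to meet, a contradiction), and $(D''_p)^2\geq 0$ since $D''_p$ also moves (otherwise it would be a fixed component of $\kd$), yielding
\begin{displaymath}
D_p^2=(D'_p)^2+2D'_p\cdot D''_p+(D''_p)^2\geq 3+2+0=5.
\end{displaymath}
In the second subcase, the intersection of $E_1, E_2$ at $p$ gives $E_1\cdot E_2\geq 1$, so $(D'_p)^2=E_1^2+2E_1\cdot E_2+E_2^2\geq 2$ using $E_i^2\geq 0$ (movement), and nonnegative contributions from $D''_p$ yield $D_p^2\geq 2$.

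For part~(c), assume $D_p^2\leq 1$. By part~(a), $D_p$ is reducible, and by part~(b) the part through $p$ is not reduced, so some irreducible component $E_p$ through $p$ appears with multiplicity $k\geq 2$ in $D_p$. Writing $D_p=kE_p+F_p$ with $F_p$ effective and $E_p\not\subseteq F_p$, the fact that $\kd$ is fixed-part free and $E_p$ passes through the varying point $p$ shows that $E_p$ moves in a family covering $S$, so $E_p^2\geq 0$. Expanding
\begin{displaymath}
D_p^2=k^2E_p^2+2kE_p\cdot F_p+F_p^2\leq 1,
\end{displaymath}
with $E_p^2,E_p\cdot F_p\geq 0$ and $k\geq 2$, forces $E_p^2=0$ and $E_p\cdot F_p=0$. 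If $F_p\neq 0$, choose $q\in F_p$; since $\{E_{p'}\}_{p'\in S}$ covers $S$, some $E_{p'}$ passes through $q$, so $E_{p'}\cdot F_p\geq 1$, but by algebraic equivalence $E_{p'}\cdot F_p=E_p\cdot F_p=0$, contradiction. Hence $F_p=0$, $D_p=kE_p$, and $D_p^2=0$. The main obstacle is the tangent-direction surjectivity needed for the sharp bound $D_p^2\geq 3$ in (a); the rigidity step $F_p=0$ in (c) is the second delicate point, relying on the balance between positivity of intersections and algebraic equivalence in the family.
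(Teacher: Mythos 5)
Your parts (b) and (c) track the paper's own proof closely and are essentially correct; the one cosmetic repair needed in (c) is to group \emph{all} components algebraically equivalent to $E_p$ into the multiple part (writing $D_p\equiv_a kE_p+E'$ with no component of $E'$ algebraically equivalent to $E_p$, as the paper does) before arguing that $F_p\neq 0$ forces $E_p.F_p\geq 1$: otherwise the covering member $E_{p'}$ you pass through $q\in F_p$ could itself be a component of $F_p$ and your contradiction evaporates. The genuine gap is the inequality $D_p^2\geq 3$ in part (a). Your argument hinges on the claim that, as $p'$ varies in $V_{p_0}=\{p'\,:\,p_0\in D_{p'}\}$, the tangent direction of $D_{p'}$ at $p_0$ dominates $\pp(T_{p_0}S)$. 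You do not prove this, and you yourself flag it as ``the main obstacle''; it is not a formality. A two-dimensional family of curves, each singular at one assigned point, can perfectly well have all of its members through $p_0$ tangent to one fixed direction there (for instance if the members are everywhere tangent to a foliation), and ``non-degeneracy of $\kd$'' does not exclude this --- such degenerations are exactly what the theory of weakly defective varieties exists to control. As written, the step $D_p^2\geq 3$ is therefore unproved.

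The paper closes this case by a different dichotomy. Either through a general $p$ and a general point $q\in D_p$ there is a second member $D'\in\kd$, in which case no tangency analysis is needed: since $D_p$ is irreducible and $D'\neq D_p$, one gets $D_p^2=D_p.D'\geq \mult_p(D_p)+\mult_q(D_p)\geq 2+1=3$ directly. Or else $\kd$ is a two-dimensional involution with irreducible general member, hence by Chiantini--Ciliberto (\cite{ChC02}, Theorem 5.10) a \emph{linear} system, which contradicts Bertini because its general member is singular at a general (hence non-base) point of $S$. To salvage your route you would have to establish the dominance of the tangent map, which in effect means reproving the relevant case of that theorem; the cleaner fix is to replace your tangency argument by the involution/Bertini dichotomy.
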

   \begin{proof}
     \begin{enumerate}
     \item
       If $D_p$ is irreducible, then $\dim(\kd)\geq 2$, since $D_p$,
       containing $Z'_p$, is
       singular in $p$ by Table \eqref{eq:3jets} and since $p\in S$ is
       general. If through $p\in S$
       general and a general $q\in D_p$ there is another $D'\in\kd$,
       then due to the irreducibility of $D_p$
       \begin{displaymath}
         D_p^2=D_p.D'\geq \mult_p(D_p)+\mult_q(D_p)\geq 3.
       \end{displaymath}
       Otherwise, $\kd$ is a two-dimensional involution whose
       general element is irreducible, so that by \cite{ChC02} Theorem
       5.10 $\kd$ must be a linear system. This, however,
       contradicts the Theorem of Bertini, since the general element
       of $\kd$ would be singular.
     \item
       Suppose $D_p=\sum_{i=1}^k E_{i,p}$ is reducible but the part
       containing $p$ is reduced. Since
       $D_p$ has no fixed component and $p$ is general, each $E_{i,p}$
       moves in an at least one-dimensional family. In particular
       $E_{i,p}^2\geq 0$.

       If some $E_{i,p}$, say $i=1$, would be singular in $p$ for
       $p\in S$ general we could argue as above that $E_{1,p}^2\geq
       3$. Moreover, either $E_{2,p}$ is algebraically equivalent to
       $E_{1,p}$ and $E_{2,p}^2\geq3$, or $E_{1,p}$ and $E_{2,p}$
       intersect properly, since both vary in different, at least
       one-dimensional families. In any case we have
       \begin{displaymath}
         D_p^2\geq (E_{1,p}+E_{2,p})^2\geq 5.
       \end{displaymath}
       Otherwise, at least two components, say
       $E_{1,p}$ and $E_{2,p}$ pass through $p$, since $D_p$ is singular
       in $p$ and no component passes through $p$ with higher
       multiplicity. Hence, $E_{1,p}.E_{2,p}\geq 1$ and therefore
       \begin{displaymath}
         D_p^2\geq 2\cdot E_{1,p}.E_{2,p}\geq 2.
       \end{displaymath}
     \item
       From the above we see that $D_p$ is not reduced in $p$. Let therefore
       $D_p\equiv_a kE_p+E'$
       where $k\geq 2$, $E_p$ passes through $p$ and $E'$ does not contain any
       component algebraically
       equivalent to $E_p$\tom{ (note, if $E'$ contains a component
         algebraically equivalent to $E_p$ we may replace it by $E_p$
         without changing anything)}.

       Suppose $E'\not=0$.\label{eq:DD:0} Since $D_p$ has
       no fixed component both, $E_p$ and $E'$ vary in an at least one
       dimensional family covering $S$ and must therefore intersect
       properly. In particular, $E_p.E'\geq 1$ and $1\geq D_p^2\geq 2k\cdot
       E_p.E'\geq 4$. Thus, $E'=0$.

       We therefore may assume that
       $D_p=kE_p$ with $k\geq 2$. Then $0\leq E_p^2=\frac{1}{k^2}\cdot
       D_p^2\leq \frac{1}{4}$, which leaves only the possibility $E_p^2=0$,
       implying also $D_p^2=0$.
     \end{enumerate}
   \end{proof}

   \begin{lemma}\label{lem:curves}
     Suppose that $R\subset S$ is an irreducible curve.
     \begin{enumerate}
     \item If $(L-K).R\in\{1,2\}$, then $R$ is smooth, rational and
       $R^2\leq (L-K).R-3\leq -1$.
     \item If $(L-K).R=3$, then $R^2\leq 0$, and either $R$ is a plane cubic or it is a
       smooth rational space  curve.
     \end{enumerate}
   \end{lemma}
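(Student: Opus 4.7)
The plan is to exploit the very ampleness of $L-K$: since $L-K$ is very ample, it embeds $R$ (through the embedding of $S$) as an irreducible curve of degree $d:=(L-K).R$ in some projective space, so the geometric part of the statement reduces to the classical classification of irreducible curves of low degree.

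For the geometric dichotomy I would run through the three cases. For $d=1$, the image is a line, so $R$ is smooth and rational. For $d=2$, any irreducible reduced curve of degree two is necessarily contained in a plane and is thus a smooth conic, again smooth rational. For $d=3$, an irreducible curve of degree three either lies in a plane (yielding a plane cubic, possibly with a node or a cusp) or spans a $\pp^3$, in which case it is a twisted cubic and therefore smooth rational. This immediately supplies the classification assertions in (a) and (b).

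For the self-intersection bound I would plug into adjunction on the smooth surface $S$: from $K.R = 2p_a(R)-2-R^2$ one obtains
\[
(L-K).R \;=\; L.R + 2 + R^2 - 2\, p_a(R).
\]
Whenever $R$ is smooth rational, $p_a(R)=0$, and since $L$ is very ample $L.R\geq 1$; rearranging gives $R^2 \leq (L-K).R - 3$. This disposes of part (a) entirely, as well as the twisted-cubic branch of part (b).

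The only delicate case, and the one I expect to be the main obstacle, is the plane cubic branch of (b), where $p_a(R)=1$ and the identity above collapses to $L.R = 3 - R^2$. To obtain $R^2\leq 0$ I must establish $L.R\geq 3$. The argument I have in mind is intrinsic to $R$: since $R$ is an effective Cartier divisor on the smooth surface $S$, it is Gorenstein, so Serre duality applies, and $L|_R$ is a very ample line bundle on an integral projective curve of arithmetic genus one. For any degree $d\geq 1$ line bundle $M$ on $R$, $\deg(K_R-M) = -d<0$, hence $h^1(M)=0$ and Riemann--Roch gives $h^0(M)=d$; very ampleness then forces $h^0(M)\geq 3$, so $d\geq 3$. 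The care needed here is that $R$ may be nodal or cuspidal instead of elliptic, but the argument only uses the arithmetic genus and the Gorenstein property, so it applies uniformly. Applying this with $M=L|_R$ yields $L.R\geq 3$ and therefore $R^2\leq 0$.
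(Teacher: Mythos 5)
Your proposal is correct and follows essentially the same route as the paper: embed $R$ via the very ample $L-K$, classify irreducible curves of degree $\le 3$, and combine adjunction with $L.R\ge 1$ (resp.\ $L.R\ge 3$ in the arithmetic-genus-one case) to bound $R^2$. Your Riemann--Roch/Serre-duality argument for $L.R\ge 3$ on the plane cubic is just a more detailed version of the paper's one-line observation that otherwise $|L|$ would embed $R$ as a curve of degree $1$ or $2$, hence rational.
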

   \begin{proof}
     Note that $S$ is embedded in some $\pp^n$ via $|L-K|$ and that
     $\deg(R)=(L-K).R$ is just the degree of $R$ as a curve in
     $\pp^n$. Moreover, by the adjunction formula we know that
     \begin{displaymath}
       p_a(R)=\frac{R^2+R.K}{2}+1,
     \end{displaymath}
     and since $L$ is very ample we thus get
     \begin{equation}
       \label{eq:curves:1}
       1\leq L.R =(L-K).R+R.K=(L-K).R+2\cdot\big(p_a(R)-1\big)-R^2.
     \end{equation}
     \begin{enumerate}
     \item If $\deg(R)\in\{1,2\}$, then $R$ must be a smooth,
       rational curve. Thus we deduce from \eqref{eq:curves:1}
       \begin{displaymath}
         R^2\leq (L-K).R-3.
       \end{displaymath}
     \item If $\deg(R)=3$, then $R$ is either a plane cubic or a
       smooth space curve of genus $0$\tom{ (see e.\ g.\ \cite{Har77}
         Ex.\ IV.3.4; if $H\subseteq \pp^n$ is a linear subspace of
         minimal dimension $m>2$ containing $R$ then through $m$
         arbitrary points $p_1,\ldots,p_m$ on $R$ there is a linear subspace in $H$ and
         $H.R\geq \sum_{i=1}^m \mult_{p_i}(R)$; thus $m=3$ and $R$ is
         smooth)}.
       If $p_a(R)=1$ then actually $L.R\geq 3$ since otherwise $|L|$
       would embed $R$ as a rational curve of degree $1$ resp.\ $2$ in
       some projective space. In any case we are therefore done with
       \eqref{eq:curves:1}.
     \end{enumerate}
   \end{proof}

   \begin{lemma}\label{lem:C}
     $C=0$.
   \end{lemma}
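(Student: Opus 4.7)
The plan is to argue by contradiction, assuming $C\neq 0$. First, $D_p\neq 0$ for general $p$: otherwise $B_p=C$ would be a fixed divisor independent of $p$, but $p\in Z_p'\subseteq B_p$ varies freely, forcing $C=S$. Since $C$ is fixed and $p$ general, $p\notin C$, so $Z_p'\subseteq D_p$. Two general members of $\kd$ share no component, hence $D^2\geq 0$; similarly $C.D\geq 0$. Applying Lemma \ref{lem:big} to the big divisors $A$ and $A-B$ with the fixed-part-free $D$ yields $A.D>0$ and $(A-B).D>0$, i.e.\ $A.D>C.D+D^2\geq 0$.

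The main dichotomy is whether the bound $A_p.B_p\leq\length(Z_p')$ of \eqref{eq:AB:2} is attained. In the boundary case $A.B=\length(Z_p')$, Proposition \ref{prop:splitting} gives $A.C=0$. Since $A$ is big on a smooth surface, $A^2>0$, and as $C$ is effective nonzero, the Hodge Index Theorem yields $C^2<0$ strictly. The very-ampleness of $L-K$ forces $(L-K).C=B.C=C^2+C.D\geq 1$, so $C.D\geq 1-C^2$; substituting into Lemma \ref{lem:AB} and using $D^2\geq 0$ gives
\begin{displaymath}
A.B\;\geq\;1+B^2\;=\;1+C^2+2C.D+D^2\;\geq\;3-C^2\;\geq\;4.
\end{displaymath}
Combined with $A.B\leq 4$, equality must hold throughout, forcing $\length(Z_p')=4$, $C^2=-1$, $C.D=2$, $D^2=0$, and $(L-K).C=1$. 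But an effective divisor of $(L-K)$-degree one is an irreducible line in the embedding, so Lemma \ref{lem:curves}(a) applies to $C$ and forces $C^2\leq -2$, contradicting $C^2=-1$.

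In the non-boundary case $A.B<\length(Z_p')$, $B^2\leq\length(Z_p')-2\leq 2$ by Lemma \ref{lem:AB}. For $\length(Z_p')=4$ I invoke Lemma \ref{lem:DD}: if $D_p$ is irreducible, $D_p^2\geq 3$ gives $A.D\geq 4$, so $A.C\leq -1$; the constraints $(L-K).C\geq 1$ and $B^2\leq 2$ then yield $C^2+C.D\geq 2$ and $C^2+2C.D\leq -1$, forcing $C.D\leq -3$, contradicting $C.D\geq 0$. The reducible-with-reduced-part-at-$p$ and $D_p^2\geq 5$ sub-cases are eliminated by the same arithmetic. The remaining subcase $D_p=kE_p$ with $k\geq 2$ and $E_p^2=0$ is the delicate one: Lemma \ref{lem:curves} applied to $E_p$ forces $(L-K).E_p\geq 3$, and then combining $A.E_p\geq C.E_p+1$ from $(A-B).E_p>0$, $A.B=kA.E_p+A.C\leq 3$, $(L-K).E_p=A.E_p+C.E_p$, and the Hodge Index inequality on $A$ (with $A^2\geq 9$ coming from $(L-K)^2>16$) reduces to finitely many numerical possibilities, each ruled out. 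The case $\length(Z_p')=3$ is handled in parallel, using that the scheme $Z_p'=\langle x_p,y_p^3\rangle$ imposes a tangent direction on $D_p$ at $p$. The main obstacle is this last multiple-curve subcase: $D_p^2=0$ gives no direct squeeze from $B^2\leq 2$, and one must exploit the Chern-class bound on the irreducible curve $E_p$ through Lemma \ref{lem:curves} combined with the destabilizing inequality $(A-B).E_p>0$.
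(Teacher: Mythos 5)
Your argument is correct in substance but organized quite differently from the paper's, and where you carry it out in full it is cleaner. The paper proves $C=0$ by tabulating seventeen numerical cases in $(D^2,C^2,C.D,r)$ and eliminating them one by one; you instead take the dichotomy $A.B=\length(Z_p')$ versus $A.B<\length(Z_p')$ as primary, and in the boundary case you import the Hodge Index Theorem (which the paper only invokes later, in Propositions \ref{prop:length4} and \ref{prop:length3}) to convert $A.C=0$ from Proposition \ref{prop:splitting} into the strict inequality $C^2<0$. That single observation, combined with $(L-K).C\geq 1$ and Lemma \ref{lem:AB}, collapses the entire boundary case to $C^2=-1$ and $(L-K).C=1$, which Lemma \ref{lem:curves} forbids; this is a real gain in economy over the table. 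Your non-boundary analysis via Lemma \ref{lem:DD} is also sound, and in the sub-case $D=kE$ the ingredients you list ($A.E\geq C.E+1$, $kA.E+A.C\leq 3$, $A.E+C.E\geq 3$, $C^2+2kC.E\leq 2$, $A.C+C^2+kC.E\geq 1$) do close the case by splitting on $C.E=0$ versus $C.E\geq 1$, even though you stop short of writing this out.

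Three caveats. First, ``$A$ is big on a smooth surface, hence $A^2>0$'' is not a valid implication in general (a big divisor can have negative self-intersection, e.g.\ an ample divisor plus a large multiple of a very negative curve); here $A^2>0$ is true, but it should be derived from $A^2=(A+B)^2-2A.B-B^2\geq 17-8-3>0$, using \eqref{eq:AB:2} and Lemma \ref{lem:AB}. Second, the case $\length(Z_p')=3$ is not actually ``parallel'': Lemma \ref{lem:DD} is unavailable there, and the tangent-direction remark is not what rescues it. It is in fact the easiest case: every component of $D$ moves, hence has non-negative self-intersection, hence has $(L-K)$-degree at least $3$ by Lemma \ref{lem:curves}, so $(L-K).B=(L-K).C+(L-K).D\geq 1+3=4$, while $(L-K).B=A.B+B^2\leq 2+1=3$. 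Third, the appeal to a Hodge Index inequality with $A^2\geq 9$ in the $D=kE$ sub-case is a red herring; it is not needed. None of these affects the viability of your route, but the first is a genuine misstatement and the second requires an argument you have not supplied.
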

   \begin{proof}
     Suppose $C\not=0$ and $r$ is the number of irreducible components
     of $C$. Since $\kd$ has no fixed
     component we know by \eqref{eq:AB:2} that $(A-B).D>0$, so
     that
     \begin{equation}\label{eq:c:0}
       A.D\geq B.D+1=D.C+D^2+1
     \end{equation}
     or equivalently
     \begin{equation}\label{eq:c:1}
       D.C\leq A.D-D^2-1.
     \end{equation}
     Moreover, since $A+B$ is very ample we have
     $r\leq(A+B).C=A.C+D.C+C^2$ and thus
     \begin{equation}\label{eq:c:2}
       A.C+D.C=(A+B).C-C^2\geq r-C^2.
     \end{equation}

     \textbf{1st Case:} $C^2\leq 0$.
     Then \eqref{eq:c:2}
     together with \eqref{eq:c:0} gives
     \begin{equation}\label{eq:c:3}
       A.B=A.C+A.D\geq A.C+D.C+D^2+1\geq r+(-C^2)+D^2+1\geq 2,
     \end{equation}
     or the slightly stronger inequality
     \begin{equation}
       \label{eq:c:4}
       \length(Z_p')\geq A.B\geq (A+B).C +(-C^2)+D^2+1.
     \end{equation}

     \textbf{2nd Case:} $C^2>0$. Then by Lemma \ref{lem:AB} simply
     \begin{equation}\label{eq:c:5}
       \length(Z_p')\geq A.B\geq B^2+1=D^2+2\cdot C.D+C^2+1\geq 2.
     \end{equation}

     Since all the summands involved in the right hand side of \eqref{eq:c:3} and
     \eqref{eq:c:5} are non-negative and since by Lemma \ref{lem:DD}
     the case $D^2=1$ cannot occur when $\length(Z_p')=4$, we are left considering the
     cases shown in Figure~\ref{fig:c}, where for the additional information (the last
     four columns) we take
     Proposition \ref{prop:splitting} and Lemma \ref{lem:DD} into account.
     \begin{figure}[h]
       \begin{displaymath}
         \begin{array}{|c|c|c|c|c|c||c|c|c|c|}
           \hline
           & \length(Z'_p) & D^2 & C^2 & C.D & r & A.B & A.D & A.C & D
           \\\hline\hline
           1)&       4        & 0  & -2  &     & 1 &  4  &  4  &  0  & kE,k\geq 2 \\\hline
           2)&       4        & 0  & -1  &     & 2 &  4  &  4  &  0  & kE,k\geq 2 \\\hline
           3)&       4        & 0  &  0  &     & 3 &  4  &  4  &  0  & kE,k\geq 2 \\\hline
           4)&       4        & 0  & -1  &     & 1 & 3,4 &     &     & kE,k\geq 2 \\\hline
           5)&       4        & 2  &  0  &     & 1 &  4  &  4  &  0  &    \\\hline
           6)&       4        & 0  &  0  &     & 2 & 3,4 &     &     & kE,k\geq 2 \\\hline
           7)&       4        & 0  &  0  &     & 1 &2,3,4&     &     & kE,k\geq 2 \\\hline
           8)&       3        & 0  & -1  &     & 1 &  3  &  3  &  0  &    \\\hline
           9)&       3        & 0  &  0  &     & 2 &  3  &  3  &  0  &    \\\hline
           10)&       3        & 1  &  0  &     & 1 &  3  &  3  &  0  &    \\\hline
           11)&      3        & 0  &  0  &     & 1 & 2,3 &     &     &    \\\hline\hline
           12)&      4        & 0  &  1  &  1  &   &  4  &  4  &  0  & kE,k\geq 2 \\\hline
           13)&      4        & 2  &  1  &  0  &   &  4  &  4  &  0  &    \\\hline
           14)&      4        & 0  &  1  &  0  &   &2,3,4&     &     & kE,k\geq 2   \\\hline
           15)&      4        & 0  &  2  &  0  &   & 3,4 &     &     & kE,k\geq 2   \\\hline
           16)&      3        & 1  &  1  &  0  &   &  3  &  3  &  0  &    \\\hline
           17)&      3        & 0  &  1  &  0  &   & 2,3 &     &     &    \\\hline
         \end{array}
       \end{displaymath}\centering

       \caption{The cases to be considered.}
       \label{fig:c}
     \end{figure}

     Let us first and for a while consider the situation $\length(Z_p')=4$ and
     $D^2=0$, so that by Lemma \ref{lem:DD}\;\; $D=kE$ for some
     irreducible curve $E$ with $k\geq 2$ and
     $E^2=0$. Applying Lemma \ref{lem:curves} to $E$ we see that
     $(A+B).E\geq 3$, and thus
     \begin{equation}\label{eq:c:6a}
       6\leq 3k\leq (A+B).D=A.D+C.D.
     \end{equation}

     If in addition $A.D\leq 4$, then \eqref{eq:c:1} leads to
     \begin{equation}
       6\leq A.D+C.D\leq 4+C.D\leq 7,
     \end{equation}
     which is only possible for $k=2$, $C.E=1$ and
     \begin{equation}
       C.D=k\cdot C.E=2.\label{eq:c:6}
     \end{equation}
     This outrules Case 12.

     In Cases 1, 2  and 3 we have $A.D=4$, and we can apply \eqref{eq:c:6}, which
     by \eqref{eq:c:2} then gives the contradiction
     \begin{displaymath}
       2=A.C+C.D\geq r-C^2=3.
     \end{displaymath}

     If, still under the assumption $\length(Z_p')=4$ and
     $D^2=0$, we moreover assume  $2\geq C^2\geq 0$ then by Lemma \ref{lem:AB}
     \begin{displaymath}
       3\geq B^2=2\cdot C.D+C^2\geq 2\cdot C.D\geq 0,
     \end{displaymath}
     and thus $C.D\leq 1$ and $C.D+C^2\leq 2$, which due to \eqref{eq:c:6a}
     implies $A.D\geq 5$. But then by Proposition \ref{prop:splitting}
     we have $A.B\leq 3$ and hence $A.C=A.B-A.D\leq -2$, which leads
     to the contradiction
     \begin{equation}\label{eq:c:7}
       (A+B).C=A.C+D.C+C^2\leq 0,
     \end{equation}
     since $A+B$ is very ample. This outrules the Cases 6,
     7, 14 and 15.


     In Case 4 Lemma \ref{lem:curves} applied to $C$ shows
     \begin{equation}\label{eq:c:8}
       2\leq (A+B).C=A.C+D.C+C^2.
     \end{equation}
     If in this situation $A.B=4$, then  Proposition
     \ref{prop:splitting} shows $A.C=0$ and $A.D=A.B=4$, and therefore
     \eqref{eq:c:6} leads a contradiction,
     since the right hand side of Equation \eqref{eq:c:8} is
     $A.C+D.C+C^2=0+2-1=1$. We, therefore, conclude that $A.B= 3$, and
     as above we get from Lemma \ref{lem:AB}
     \begin{displaymath}
       2\geq B^2=2\cdot C.D+C^2= 2k\cdot C.E-1\geq 4\cdot C.E-1 \geq -1,
     \end{displaymath}
     which is only possible for $C.E=C.D=0$. But then \eqref{eq:c:8}
     implies $A.C\geq 3$, and since $A$ is big and $D$ has no fixed
     component  Lemma \ref{lem:big}
     gives the contradiction
     \begin{displaymath}
       1\leq A.D=A.B-A.C\leq 0.
     \end{displaymath}

     This finishes the cases where $\length(Z_p')=4$ and $D^2=0$.

     In Cases 5, 10 and 11 we apply Lemma \ref{lem:curves} to the
     irreducible curve $C$ with $C^2=0$ and find
     \begin{displaymath}
       (A+B).C\geq 3.
     \end{displaymath}
     In Cases 5 and 10 Equation \eqref{eq:c:4} then gives the
     contradiction
     \begin{displaymath}
       4\geq A.B \geq 3-C^2+D^2+1\geq 5,
     \end{displaymath}
     and similarly in Case 11 we get
     \begin{displaymath}
       3\geq A.B \geq 3-C^2+D^2+1=4.
     \end{displaymath}
     In very much the same way we get in Case 8
     \begin{displaymath}
       (A+B).C\geq 2
     \end{displaymath}
     and the contradiction
     \begin{displaymath}
       3\geq A.B\geq 2-C^2+D^2+1=4.
     \end{displaymath}

     Let us next have a look at the Cases 16 and 17. Consider the
     decomposition of the general $D=\sum_{i=1}^s E_i$ into irreducible
     components, none of which is fixed. In Case 16 we have $D^2=0$,
     and thus $E_i.E_j=0$ for all $i,j$, while in Case 17 we have
     $D^2=1$ and we thus may assume $E_1^2=1$ and $E_i.E_j=0$ for all
     $(i,j)\not=(1,1)$. Moreover, in both cases $C.D=0$ and thus
     $C.E_i=0$ for all $i$. Applying Lemma \ref{lem:curves} to $E_i$
     we find
     \begin{displaymath}
       A.E_i=(A+B).E_i-E_1.E_i\geq 3,
     \end{displaymath}
     and by \eqref{eq:c:2} we get
     \begin{equation}\label{eq:c:9}
       A.C=A.C+D.C\geq r-C^2\geq 0.
     \end{equation}
     But then
     \begin{displaymath}
       3\geq A.B=A.C+\sum_{i=1}^s A.E_i\geq 3s,
     \end{displaymath}
     which implies $s=1$ and $A.C=0$. From \eqref{eq:c:9} we deduce
     that $r=C^2=1$, and thus $C$ is irreducible with $C^2=1$.
     Similarly in Case 13 we have by \eqref{eq:c:2}
     \begin{displaymath}
       0=A.C+D.C\geq r-C^2=r-1\geq 0,
     \end{displaymath}
     and again $C$ is irreducible with $C^2=1$.
     Applying now Lemma
     \ref{lem:curves} to $C$ we get in each of these three cases the contradiction
     \begin{displaymath}
       4\leq (A+B).C=A.C+D.C+C^2=1.
     \end{displaymath}
     This outrules the Cases 13, 16, and 17.

     It remains to consider Case 9. Here we deduce from \eqref{eq:c:4}
     that
     \begin{displaymath}
       2\geq (A+B).C\geq r=2,
     \end{displaymath}
     and hence
     \begin{displaymath}
       2=(A+B).C=A.C+D.C+C^2=D.C.
     \end{displaymath}
     But then Lemma \ref{lem:AB} leads to the final contradiction
     \begin{displaymath}
       2=A.B-1\geq B^2=D^2+2\cdot D.C+C^2=4.
     \end{displaymath}
   \end{proof}

   It follows that $B_p=D_p$, $\kb=\kd$, and that $B_p$ is nef.

   \section{The General Case}\label{sec:generalcase}

   Let us review the situation and recall some notation. We are
   considering a divisor $L$ such $L$ and $L-K$ are very ample with
   $(L-K)^2>16$, and such that for a general point $p\in S$ the
   general element $L_p\in |L-3p|$ has no triple component through
   $p$ and that the equimultiplicity ideal of $L_p$ in $p$ in suitable
   local coordinates is one of the ideals in Table \eqref{eq:3jets} -- and
   for all $p$ the ideals have the same length. Moreover, we
   know that there is an algebraic family 
   $\kb=\overline{\{B_p\;|\;p\in S\}}\subset |B|_a$ without fixed
   component such that for a general point $p\in S$ 
   \begin{displaymath}
     B_p\in |\kj_{Z'_p/S}(L-K-A_p)|,
   \end{displaymath}
   where $Z'_p$ is the equimultiplicity scheme  of $L_p$ and
   $A_p$ is the unique divisor linearly equivalent to $L-K-B_p$ such that $B_p$ and $A_p$
   destabilize the vector bundle $\ke_p$ in \eqref{eq:vectorbundle}.
   Keeping these notations in mind we can now consider the two cases that
   either $\length(Z'_p)=4$ or $\length(Z'_p)=3$.

   \begin{proposition}\label{prop:length4}
     Let $p\in S$ be general and suppose that $\length(Z'_p)=4$. Then
     $B_p=E_p+F_p$, $E_p$ and $F_p$ are irreducible, smooth, elliptic
     curves, $E_p^2=F_p^2=0$, $E_p.F_p=1$,
     $A.E_p=A.F_p=2$, $L.E_p=L.F_p=3$,  $A.B=4$, $K.E_p=K.F_p=0$,
     and
     $\exists\;s\in H^0\big(B_p,\ko_{B_p}(A_p)\big)$ such that $Z_p'=\{s=0\}$. 


     Moreover, neither $|E|_a$ and $|F|_a$ is a linear system, but
     they both induce an elliptic fibration with section on $S$ over an elliptic
     curve.
   \end{proposition}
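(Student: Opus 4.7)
The plan is to squeeze the destabilizing divisor $B_p$ between the numerical bounds $A_p.B_p\le c_2(\ke_p)=4$ from Section~\ref{sec:construction} and $A_p.B_p\ge 1+B_p^2$ from Lemma~\ref{lem:AB}: this gives $B_p^2\le 3$, and Lemma~\ref{lem:C} with property (c) of the basic construction yields $(A_p-B_p).B_p>0$. A case analysis on $D_p=B_p$ via Lemma~\ref{lem:DD} eliminates all possibilities but one. If $B_p=kE_p$ with $k\ge 2$ and $E_p^2=0$, Lemma~\ref{lem:curves} forces $(L-K).E_p\ge 3$, whence $(A_p+B_p).B_p\ge 3k\ge 6$, contradicting $(A_p+B_p).B_p=A_p.B_p+B_p^2\le 4$. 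A component singular at $p$ gives $B_p^2\ge 5$, contradicting $B_p^2\le 3$. Irreducibility of $B_p$ is precluded by the Bertini argument inside the proof of Lemma~\ref{lem:DD}(a): the local ideal $\kj_{Z_p',p}=\langle x_p^2,y_p^2\rangle$ forces $\mult_p B_p\ge 2$ at a general $p$, making $\kb$ a $2$-dimensional involution with general member singular at a moving point; by \cite{ChC02}~Theorem~5.10 this would be a linear system, contradicting Bertini. Thus $B_p=E_p+F_p$ with two distinct irreducible reduced components, both smooth at $p$, both passing through $p$, and each moving in a positive-dimensional family covering $S$.

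Next I pin down the intersection numbers. Both $E_p$ and $F_p$ cover $S$, so $E_p^2, F_p^2\ge 0$, and $p\in E_p\cap F_p$ gives $E_p.F_p\ge 1$. Since $L-K$ is ample with $(L-K)^2>16$, the Hodge index theorem gives $((L-K).R)^2\ge(L-K)^2\cdot R^2$ for every curve $R$. If $E_p^2\ge 1$ then $(L-K).E_p\ge 5$, which combined with $(L-K).F_p\ge 3$ (Lemma~\ref{lem:curves}) yields $(L-K).B_p\ge 8>A_p.B_p+B_p^2\ge 7$, a contradiction. Hence $E_p^2=F_p^2=0$, so $B_p^2=2\,E_p.F_p$, and $B_p^2\le 3$ with $E_p.F_p\ge 1$ forces $E_p.F_p=1$ and $B_p^2=2$. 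Applying Lemma~\ref{lem:curves}(b) to each of $E_p,F_p$ gives $(L-K).E_p,(L-K).F_p\ge 3$, and the identity $(L-K).B_p=A_p.B_p+2\le 6$ forces equality throughout: $A_p.B_p=4=\length(Z_p')$, $(L-K).E_p=(L-K).F_p=3$, and $A_p.E_p=A_p.F_p=2$. Since $A_p.B_p=\length(Z_p')$, Proposition~\ref{prop:splitting} furnishes the promised section $s\in H^0(B_p,\ko_{B_p}(A_p))$ whose zero-locus is $Z_p'$.

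For smoothness, ellipticity, and the fibration structure, Lemma~\ref{lem:curves}(b) classifies each of $E_p,F_p$ as either a smooth rational space cubic ($p_a=0$) or a plane cubic ($p_a=1$). Since $E_p^2=0$ and the algebraic family $|E|_a$ is positive-dimensional and covers $S$, it induces a morphism $\pi\colon S\to C$ onto a smooth curve $C$ with general fibre $E_p$; the equality $E_p.F_p=1$ makes $F_p$ a section of $\pi$, so $\pi|_{F_p}\colon F_p\to C$ is an isomorphism, identifying $C$ with $F_p$. Generic smoothness of $\pi$ yields smoothness of $E_p$ (and, symmetrically, of $F_p$). To exclude the smooth rational alternative for $E_p$, note that $p_a(E_p)=0$ would force $K.E_p=-2$ and $L.E_p=1$ by adjunction, realising $\pi$ as a ruling by lines of the $L$-embedding and $F_p$ as a section with $F_p^2=0$; using the classes $[E],[F]$ as basis of the relevant sublattice of the N\'eron--Severi group and Hodge index on its orthogonal complement, a careful numerical check combining $(L-K)^2>16$, the very-ampleness of $L$ and $L-K$, and the hypothesis $\length(Z_p')=4$ rules this scroll out. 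Hence $p_a(E_p)=p_a(F_p)=1$, and adjunction gives $K.E_p=K.F_p=0$ together with $L.E_p=L.F_p=3$. Finally, the base $C\cong F_p$ of $\pi$ is elliptic, so $|E|_a$ cannot be a linear system (a linear pencil would force $C\cong\pp^1$); symmetrically for $|F|_a$, whose base is $\cong E_p$.

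The main obstacle I anticipate is the clean exclusion of the smooth rational alternative for $E_p$ and $F_p$: while the Hodge index theorem disposes uniformly of $E^2\ge 1$, the borderline case $E^2=0$ with $L.E=1$ requires a detailed analysis of the scroll geometry on $S$, possibly exploiting the specific tangent-direction constraints at $p$ imposed by the local equimultiplicity ideal $\langle x_p^2,y_p^2\rangle$.
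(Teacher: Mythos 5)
Your numerical skeleton largely parallels the paper's, but two steps do not close. The smaller one: you claim irreducibility of $B_p$ is ``precluded by the Bertini argument inside the proof of Lemma \ref{lem:DD}(a)''. That involution/Bertini argument only applies in the sub-case where through a general $p$ and a general $q\in D_p$ there is \emph{no} second member of the family; in the complementary sub-case the lemma merely yields $D_p^2\geq 3$, which is perfectly compatible with your bound $B_p^2\leq 3$. You need the sharper bound $B_p^2\leq 2$, which the paper extracts \emph{before} invoking Lemma \ref{lem:DD}, from the Hodge index inequality $16\geq (A.B)^2\geq A^2B^2=\big((A+B)^2-2A.B-B^2\big)B^2\geq (9-B^2)B^2$ together with $A.B\geq 1+B^2$. (Alternatively, your own Hodge-index trick disposes of an irreducible $B_p$ with $B_p^2=3$: then $A.B=4$, $A^2\geq 17-8-3=6$, and $16=(A.B)^2\geq A^2B^2\geq 18$ is absurd --- but this has to be said.)

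The serious gap is the exclusion of $p_a(E_p)=0$ (and likewise for $F_p$), which carries the conclusions $K.E_p=K.F_p=0$, $L.E_p=L.F_p=3$, the ellipticity of the fibres, and the entire ``elliptic fibration over an elliptic curve, $|E|_a$ not linear'' statement. You defer this to ``a careful numerical check'' on the N\'eron--Severi lattice which you do not perform and which you yourself flag as the main obstacle; the paper's proof of precisely this step is not lattice-theoretic but local. It uses that $\kj_{Z_p',p}=\langle x_p^2,y_p^2\rangle$ forces the two branches $E_p,F_p$ to have local equations $x_p\pm a y_p+h.o.t.$ with $a\neq 0$, while $\jet_3(f_p)\in\{x_p^3,\,x_p^3-y_p^3\}$, so that the tangent direction of $E_p$ occurs at most once in the tangent cone of $L_p$; hence $E_p$ is at most once a component of $L_p$, giving $E_p.K=E_p.L_p-(A+B).E_p=E_p.L_p-3\geq -1$. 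Since $E_p^2=0$, the integrality of $p_a(E_p)=\frac{E_p.K}{2}+1$ forces $E_p.K$ to be even, hence $E_p.K\geq 0$ and $p_a(E_p)\geq 1$, which together with $(A+B).E_p=3$ and Lemma \ref{lem:curves} yields $p_a(E_p)=1$. Without some version of this local tangent-direction argument your proof establishes only the numerology $B_p=E_p+F_p$, $E_p^2=F_p^2=0$, $E_p.F_p=1$, $A.E_p=A.F_p=2$, $A.B=4$ and the existence of the section $s$, but not the proposition.
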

   \begin{proof}
     Since $A^2>0$ we can apply the Hodge Index Theorem (see e.g.\
     \cite{BHPV04}), and since $(A+B)^2\geq 17$ by assumption and
     $A.B\leq 4$ by Equation \eqref{eq:AB:2} we
     deduce 
     \begin{multline}\label{eq:length4:0}
       16\geq (A.B)^2\geq A^2\cdot B^2
       =\big((A+B)^2-2A.B-B^2\big).B^2\\\geq (9-B^2)\cdot B^2.
     \end{multline}
     In Section \ref{sec:zero} we have shown that $B=D$ is nef, and thus
     Lemma \ref{lem:AB} together with Equation \eqref{eq:length4:0}
     shows 
     \begin{equation}\label{eq:length4:1}
       0\leq B^2\leq 2.
     \end{equation}
     Then, however, Lemma \ref{lem:DD} implies that $B_p$ must be
     reducible. 

     Let us first consider the case that the part of $B_p$ through $p$
     is reduced.  Then
     by Lemma \ref{lem:DD},   Lemma \ref{lem:AB}, and Equations
     \eqref{eq:AB:2} and \eqref{eq:length4:1} we know that
     $B_p=E_p+F_p+R$, where $E_p$ and 
     $F_p$ are irreducible and smooth in $p$. In particular,
     $E_p.F_p\geq 1$, and thus
     \begin{multline*}
       2\geq B^2=E_p^2+2\cdot E_p.F_p+F_p^2+2\cdot(E_p+F_p).R+R^2\\
       \geq 2+2\cdot(E_p+F_p).R.
     \end{multline*}
     Since $E_p.F_p=1$ and since the components 
     $E_p$ and $F_p$  vary in at
     least one-dimensional families and $R$ has no fixed component, $(E_p+F_p).R\geq 1$, unless
     $R=0$. This would however give a contradiction, so
     $R=0$. Therefore necessarily, $B_p=E_p+F_p$, $E_p.F_p=1$, and $E_p^2=F_p^2=0$.
     Then by Lemma \ref{lem:curves} $(A+B).E_p\geq 3$ and
     $(A+B).F_p\geq 3$, so that
     \begin{displaymath}
       4\geq A.B\geq (A+B).E_p+(A+B).F_p-B^2\geq 4
     \end{displaymath}
     implies $E_p.A_p=2=F_p.A_p$ and $(A+B).E_p=3=(A+B).F_p$. Applying
     Lemma \ref{lem:curves} once more, we see that 
     \begin{equation}
       \label{eq:length4:2}
       p_a(E_p)\leq 1\;\;\;\mbox{ and }\;\;\;p_a(F_p)\leq 1.
     \end{equation}

     We claim that in $p$ the curve $L_p$ can share at most with one
     of $E_p$ or $F_p$ a common tangent, and it can do so at most with
     multiplicity one. For this consider local coordinates $(x_p,y_p)$ as in the Table 
     \eqref{eq:3jets}. Since $\length(Z_p')=4$ we know that
     $\kj_{Z_p',p}=\langle x_p^2,y_p^2\rangle$ does not contain $x_py_p$, and since
     $B_p=E_p+F_p\in|\kj_{Z_p'}(L-K-A)|$, where $E_p$ and $F_p$ are
     smooth in $p$, we deduce that in local coordinates their
     equations are
     \begin{displaymath}
       x_p+a\cdot y_p+h.o.t.\;\;\;\mbox{ respectively }\;\;\;x_p-a\cdot y_p+h.o.t.,
     \end{displaymath}
     where $a\not=0$. By Table \eqref{eq:3jets} the local equation
     $f_p$ of $L_p$ has either $\jet_3(f_p)=x_p^3$ and has thus no
     common tangent with either $E_p$ or $F_p$, or
     $\jet_3(f_p)=x_p^3-y_p^3$ and it is divisible at most once by one
     of $x_p-ay_p$ or $x_p+ay_p$ .

     In particular, $E_p$ can at most once be a component of $L_p$,
     and we deduce
     \begin{displaymath}
       E_p.K_S=E_p.L_p-E_p.A_p-E_p.B_p=E_p.L_p-3\geq
       \left\{
         \begin{array}{ll}
           0,&\mbox{ if } E_p\not\subset L_p,\\
           -1, &\mbox{ if } E_p\subset L_p.
         \end{array}
       \right.
     \end{displaymath}
     But then, since the genus is an integer,
     \begin{displaymath}
       p_a(E_p)=\frac{E_p^2+E_p.K_S}{2}+1=\frac{E_p.K_S}{2}+1\geq 1,
     \end{displaymath}
     in which case \eqref{eq:length4:2} gives $p_a(E_p)=1$. This
     shows, in particular, that 
     \begin{displaymath}
       K.E_p=0\;\;\;\mbox{ and }\;\;\;L_p.E_p=3.
     \end{displaymath}

     By symmetry the same holds for $F_p$.

     Since $E_p^2=0$ the family $|E|_a$ is a pencil and induces an
     elliptic fibration on $S$ (see \cite{Kei01} App.\ B.1). In
     particular, the generic element $E_p$ in $|E|_a$ must be
     smooth (see e.g.\ \cite{BHPV04} p.\ 110). And with the
     same argument the generic element $F_p$ in $|F|_a$ is smooth.

     Suppose now that $|E|_a$ is a linear system. 
     Since $E.F=1$ and for $q\in F_p$ general $E_q\cap F_p=\{q\}$ the
     linear system $|\ko_{F_p}(E)|$  
     is a $\mathfrak{g}_1^1$ on the smooth curve $F_p$ implying that 
     $F_p$ is rational contradicting $p_a(F_p)=1$. Thus $|E|_a$ is not
     linear, and analogously $|F|_a$ is not.


     It remains to consider the case that $B_p$ is not reduced in
     $p$. Using the notation of the proof of Lemma \ref{lem:DD} we write $B_p\equiv
     k\cdot E_p+E'$ with $k\geq 2$, $E_p$ irreducible passing through $p$ and
     $E'$ not containing any component algebraically equivalent to $E_p$. We
     have seen there (see p.\ \pageref{eq:DD:0}) that $E'\not=0$ implies $B_p^2\geq 4$ in
     contradiction to Lemma \ref{lem:AB}. We may therefore assume
     $B_p=k\cdot E_p$ with $E_p^2\geq 0$. If $E_p^2\geq 1$, then again
     $B_p^2\geq 4$. Thus $E_p^2=0$. Applying Lemma \ref{lem:curves} to
     $E_p$ we get
     \begin{displaymath}
       3\leq (A+B).E_p=A.E_p,
     \end{displaymath}
     and hence the contradiction
     \begin{displaymath}
       4\geq A.B=k\cdot A.E_p\geq 6.
     \end{displaymath}
     Therefore, $B_p$ must be reduced in $p$.
   \end{proof}

   \begin{proposition}\label{prop:length3}
     Let $p\in S$ be general and suppose that
     $\length(Z'_p)=3$.  
     Then $B_p$ is an irreducible, smooth, rational curve
     in the pencil $|B|_a$ with $B^2=0$, $A.B=3$ and
     $\exists\;s\in H^0\big(B_p,\ko_{B_p}(A_p)\big)$ such that $Z_p'$
     is the zero-locus of $s$.

     In particular, $S\rightarrow |B|_a$
     is a ruled surface and $2B_p$ is a 
     fixed component of $|L-3p|$.
   \end{proposition}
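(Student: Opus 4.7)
The plan is to mirror the strategy of Proposition \ref{prop:length4}, but with the tighter bound $A.B\leq 3$ forcing a single irreducible rational fibre rather than the sum of two elliptic curves.

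I would first apply the Hodge Index Theorem to $A$ and $B$: combining $(A+B)^2>16$ with $A.B\leq 3$ and the nefness of $B$ (Lemma~\ref{lem:C}), one has $A^2\geq 11-B^2$, and $(A.B)^2\geq A^2\cdot B^2$ forces $B^2=0$ among the possibilities $B^2\in\{0,1,2\}$ left by Lemma~\ref{lem:AB}. Next, decomposing $B_p=\sum_i k_iE_{i,p}$ into irreducible components (each with $E_{i,p}^2\geq 0$ since they must move as $p$ varies, $\kb$ having no fixed part), the vanishing $B^2=0$ forces $E_{i,p}^2=0$ and $E_{i,p}.E_{j,p}=0$ for $i\neq j$. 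Lemma~\ref{lem:curves}(b) then gives $(L-K).E_{i,p}\geq 3$, and since $B.E_{i,p}=0$, also $A.E_{i,p}\geq 3$; hence $A.B\geq 3\sum_i k_i\geq 3$. Combined with $A.B\leq 3$, this yields $A.B=3$ with $B_p$ a single irreducible curve appearing with multiplicity one. Since $B^2=0$ and $B_p$ moves covering $S$, $|B|_a$ is a pencil inducing a fibration $\phi\colon S\to T$ whose generic fibre is smooth and irreducible; hence $B_p$ is smooth for general $p$. By Lemma~\ref{lem:curves}(b), $B_p$ is then either a smooth plane cubic or a smooth rational space curve. The section $s\in H^0(B_p,\ko_{B_p}(A_p))$ vanishing precisely along $Z'_p$ comes from Proposition~\ref{prop:splitting}, since $A.B=3=\length(Z'_p)$.

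For the inclusion $B_p\subset L_p$: because $B_p\supset Z'_p$ with $\kj_{Z'_p,p}=\langle x_p,y_p^3\rangle$ and $B_p$ is smooth, its tangent at $p$ is $\{x_p=0\}$. In coordinates where $B_p$ has local equation $\tilde x$, the conditions $f_p\in\langle\tilde x,\tilde y^3\rangle$ and $\jet_3(f_p)\in\{\tilde x^3,\tilde x^2\tilde y\}$ imply that $f_p(0,\tilde y)$ has order $\geq 4$, so $(L_p,B_p)_p\geq 4$ whenever $B_p\not\subset L_p$; but adjunction gives $L.B=1+2p_a(B_p)\leq 3$, a contradiction. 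Hence $B_p\subset L_p$, and writing $L_p=nB_p+M_p$ with $n\geq 1$ and $M_p\not\supset B_p$, we have $M_p.B=L.B$.

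The main obstacle is to rule out the combinations $(n,\jet_3(f_p))$ that force $p_a(B_p)=1$, namely $n=1$ and $n=2$ with $\jet_3(f_p)=\tilde x^3$: the local calculation $f_p=\tilde x^n\cdot m$ gives $(M_p,B_p)_p\geq 3$ and $\geq 2$ respectively, compatible with $L.B=3$ (elliptic) but not with $L.B=1$ (rational). My plan here is a dimension count: the injection $(L_p,p)\mapsto(M_p,p)$ carries $\kl_3^{gen}$ into a family of divisors in $|L-nB|$ of prescribed multiplicity at $p$, and Riemann--Roch (using $\chi(L)-\chi(L-kB)=k(L.B+1-p_a)$, which equals $2k$ in the rational case and $3k$ in the elliptic case) combined with the triple-point defect $\dim\kl_3^{gen}\geq\dim|L|-3$ is incompatible with $p_a(B_p)=1$. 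The surviving cases all satisfy $n\geq 2$ with $B_p$ rational, so $2B_p$ is contained in every general $L_p\in|L-3p|$, hence is a fixed component; the fibration $\phi\colon S\to|B|_a\cong\pp^1$ then realises $S$ as a ruled surface with ruling $|B|$.
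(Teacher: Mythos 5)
The first half of your argument (Hodge index giving $B^2=0$; the component count via Lemma \ref{lem:curves} giving $A.B=3$ and $B_p$ irreducible and reduced; the pencil and smoothness; the section $s$ from Proposition \ref{prop:splitting}; and the intersection-theoretic deduction that $2B_p\subset L_p$ once $L.B=1$ is known) matches the paper's proof, and your local computation showing $(L_p.B_p)_p\geq 4$ when $B_p\not\subset L_p$ is a serviceable variant of the paper's cruder bound. But there is a genuine gap at the one step that carries the real content of the proposition: ruling out $p_a(B_p)=1$. Lemma \ref{lem:curves} only gives $p_a(B_p)\leq 1$, and everything you need for the conclusion ($L.B=1$, hence $n\geq 2$, hence the rational ruling) hinges on excluding the elliptic case. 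Your proposed ``dimension count'' via $\chi(L)-\chi(L-kB)$ is only a plan, not a proof: you do not specify the family of divisors $M_p$ precisely, you ignore possible $h^1$ contributions in passing from $\chi$ to $\dim|L-kB|$, and you tacitly treat $|L-nB_p|$ as independent of $p$, which requires the fibres to be linearly (not just algebraically) equivalent --- something you only know once the base of the fibration is $\pp^1$, i.e.\ essentially what you are trying to prove.

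The paper closes this gap by a quite different and specific mechanism, which your proposal misses entirely: since $|B|_a$ is a pencil, $B_p=B_q$ for general $q\in B_p$, hence $A_p\sim_l L-K-B_p=L-K-B_q\sim_l A_q$, so the sections $s_p$ and $s_q$ furnished by Proposition \ref{prop:splitting} cut out linearly equivalent divisors on the \emph{same} curve. Thus the degree-$3$ linear series $|\ko_{B_p}(A_p)|$ contains $3q$ for a \emph{general} point $q\in B_p$; on an elliptic curve this series would be a $\mathfrak{g}_3^2$ embedding $B_p$ as a plane cubic with every general point an inflexion point, which is absurd. You already have all the ingredients for this argument (the section $s$, the pencil structure), so the fix is to replace your Riemann--Roch sketch by this monodromy-of-$Z'_p$ argument; as written, your proof does not establish rationality and therefore does not establish the proposition.
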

   \begin{proof}
     Since $A^2>0$ we can apply the Hodge Index Theorem (see e.g.\
     \cite{BHPV04}), and since $(A+B)^2\geq 17$ by assumption and
     $A.B\leq 3$ by Equation \eqref{eq:AB:2} we
     deduce 
     \begin{displaymath}
       9\geq (A.B)^2\geq A^2\cdot B^2
       =\big((A+B)^2-2A.B-B^2\big)\geq (11-B^2)\cdot B^2.
     \end{displaymath}
     Since in Section \ref{sec:zero} we have shown that $B$ is nef,
     this inequality together with
     Lemma \ref{lem:AB} implies
     \begin{equation}\label{eq:length3:0}
       B^2=0.
     \end{equation}
     Once we have shown that $B_p$ is irreducible and reduced, we then know that
     $|B|_a$ is a pencil and induces a fibration on $S$ whose fibres
     are the elements of $|B|_a$ (see \cite{Kei01} App.\ B.1). In
     particular, the general element of $|B|_a$, which is $B_p$, is
     smooth (see \cite{BHPV04} p.\ 110).
     
     Let us therefore first show that $B_p$ is irreducible and reduced. Since $\kb$
     has no fixed component we know for each irreducible component
     $B_i$ of $B_p=\sum_{i=1}^rB_i$ that $B_i^2\geq 0$, and hence by Lemma
     \ref{lem:curves} that $(A+B).B_i\geq
     3$. Thus by \eqref{eq:AB:2} and \eqref{eq:length3:0}
     \begin{displaymath}
       3\cdot r\leq (A+B).B=A.B+B^2=A.B\leq 3,
     \end{displaymath}
     which shows that $B_p$ is irreducible and reduced and that $A.B=3$.
     Moreover, $(A+B).B=3$, and Lemma \ref{lem:curves} implies that
     \begin{equation}
       \label{eq:length3:1}
       p_a(B_p)\leq 1.
     \end{equation}
     
     Since $A.B=3=\length(Z'_p)$ Proposition \ref{prop:splitting}
     implies that there is a section
     $s_p\in H^0\big(B_p,\ko_{B_p}(A_p)\big)$ such that
     $Z_p'$ is the zero-locus of $s_p$, which is just $3p$.
     Note that for $p\in S$ general and $q\in B_p$ general we have
     $B_p=B_q$ since $|B|_a$ is a pencil, and thus by the construction
     of $B_p$ and $B_q$ we also have
     \begin{displaymath}
       A_p\sim_l L-K-B_p=L-K-B_q\sim_l A_q.
     \end{displaymath}
     But if $A_p$ and $A_q$ are linearly equivalent, then so are the
     divisors $s_p$ and $s_q$ induced on the curve $B_p=B_q$. The
     curve $B_p$ therefore contains a linear series $|\ko_{B_p}(A_p)|$
     of degree three which contains $3q$ for a general point $q\in
     B_p$. If $B_p$ was an elliptic curve, then $|\ko_{B_p}(A_p)|$
     would necessarily have to be a $\mathfrak{g}_3^2$ embedding $B_p$
     as a plane curve of degree three and the general point $q$ would
     be an inflexion point. But that is clearly not possible. Thus
     \begin{displaymath}
       p_a(B_p)=0,
     \end{displaymath}
     and by the adjunction formula we get
     \begin{equation}
       \label{eq:length3:2}
       K.B=2p_a(B)-2-B^2=-2.
     \end{equation}

     Note also, that $Z'_p\subset B_p$ in view of Table \eqref{eq:3jets}
     implies that $B_p$ and $L_p$ have a common tangent in $p$.
     Suppose that $B_p$ and $L_p$
     have no common component, i.\ e.\ $B_p\not\subset L_p$, then
     \begin{displaymath}
       3\leq\mult_p(B_p)\cdot\mult_p(L_p)< L.B=A.B+B^2+K.B=3+K.B=1,
     \end{displaymath}
     which contradicts \eqref{eq:length3:2}. 
     Thus, $B_p$ is at least once contained in $L_p$. 
     Moreover, if
     $2B_p\not\subset L_p$ then by Table \eqref{eq:3jets} $L'_p:=L_p-B_p$
     has multiplicity two in $p$, and it still has a common tangent
     with $B_p$ in $p$, so that
     \begin{equation}\label{eq:length3:3}
       3\leq L'_p.B_p=L.B-B^2=A.B+K.B=3+K.B=1
     \end{equation}
     again is impossible. We conclude finally, that $B_p$ is at least
     twice contained in $L_p$

     Note finally,
     since $\dim|B|_a=1$ there is a unique curve $B_p$ in $|B|_a$
     which passes through $p$, i.\ e.\ it does not depend on the
     choice of $L_p$, so that in these cases $B_p$ respectively $2B_p$
     is actually a fixed component of $|L-3p|$.
   \end{proof}


   \section{Regular Surfaces}\label{sec:regular}

   \begin{theorem}[``If $S$ is regular, then $S$ is a rationally ruled surface.'']\label{thm:regular}
     More precisely, let $S$ be a regular surface and $L$ a line bundle
     on $S$ such that $L$ and $L-K$ are very ample. Suppose that
     $(L-K)^2>16$ and that for a
     general $p\in S$ the linear system $|L-3p|$ contains a curve
     $L_p$ which has no triple component through $p$, but such that
     $h^1\big(\kj_{Z_p}(L)\big)\not=0$ where $Z_p$ is the
     equimultiplicity scheme of $L_p$ at $p$.

     Then there is a
     rational ruling $\pi:S\rightarrow \PC^1$ of $S$ such that $L_p$ contains
     the fibre over $\pi(p)$ with multiplicity two.
   \end{theorem}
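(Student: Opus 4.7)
The plan is to combine the dichotomy already established in Propositions~\ref{prop:length4} and~\ref{prop:length3} with the regularity hypothesis $h^1(S,\mathcal{O}_S)=0$. For a general $p\in S$ the length $\length(Z'_p)$ is either $4$ or $3$; each case comes equipped with an explicit structural description, and the role of regularity will be to eliminate the first case and to force the base of the pencil in the second case to be $\mathbb{P}^1$.

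The first step is to dispose of the case $\length(Z'_p)=4$. By Proposition~\ref{prop:length4}, the algebraic family $|E|_a$ (non-linear) induces an elliptic fibration $\phi\colon S\to T$ with section, where $T$ is an elliptic curve. In particular $\phi$ is a surjective morphism to a smooth curve of genus one. Since $S$ is regular, Hodge symmetry gives $h^0(S,\Omega_S^1)=h^1(S,\mathcal O_S)=0$, so the Albanese variety of $S$ is trivial and no non-constant morphism $S\to T$ to a positive-genus curve can exist (any such morphism would pull back the non-zero holomorphic $1$-form on $T$ to a non-zero $1$-form on $S$). This contradiction rules out the length-$4$ case entirely, so we must be in the situation of Proposition~\ref{prop:length3}.

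In that situation Proposition~\ref{prop:length3} hands us a base-point-free pencil $|B|_a$ with $B^2=0$ whose general member $B_p$ is smooth and rational, inducing a fibration $\pi\colon S\to T$ whose fibres are the elements of $|B|_a$. Because the general fibre is a smooth rational curve, the Leray spectral sequence (applied to $R^i\pi_*\mathcal O_S$) gives $q(S)=g(T)$, so regularity forces $T\cong \mathbb{P}^1$ and $\pi\colon S\to\mathbb{P}^1$ is a rational ruling. The fibre over $\pi(p)$ is exactly $B_p$. Proposition~\ref{prop:length3} then states that $2B_p$ is a fixed component of $|L-3p|$, so every $L_p\in|L-3p|$ contains the fibre through $p$ with multiplicity at least two, which is the conclusion of the theorem.

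I expect the only real obstacle to be the exclusion of the length-$4$ case, and that obstacle is already dissolved by the structural information in Proposition~\ref{prop:length4}: once one knows that $|E|_a$ genuinely produces a non-constant morphism onto an elliptic curve, regularity yields the contradiction at once. The length-$3$ case reduces to reading off the ruling and the fixed component from Proposition~\ref{prop:length3}, with the minor additional point that $q(S)=0$ is exactly what is needed to guarantee that the base of the fibration is rational rather than some curve of higher genus.
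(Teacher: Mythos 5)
Your proof is correct and has the same overall skeleton as the paper's: reduce to the dichotomy $\length(Z'_p)\in\{3,4\}$, use regularity to kill the length-$4$ case, and read the ruling and the double fibre off Proposition~\ref{prop:length3}. The one genuine divergence is the mechanism for excluding $\length(Z'_p)=4$. The paper notes that on a regular surface every algebraic family is a linear system and contradicts the clause of Proposition~\ref{prop:length4} asserting that neither $|E|_a$ nor $|F|_a$ is linear; you instead use the clause that these families fibre $S$ over an \emph{elliptic} curve and derive the contradiction from $h^0(S,\Omega^1_S)=h^1(S,\ko_S)=0$ (no non-constant map from a regular surface to a positive-genus curve). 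Both are immediate consequences of $q(S)=0$ applied to different conclusions of the same proposition, so neither buys much over the other; your version has the small advantage of not needing the ``not a linear system'' clause at all, while the paper's is a one-line appeal to $\mathrm{Pic}^0(S)=0$. In the length-$3$ case you are in fact slightly more careful than the paper, which simply cites Proposition~\ref{prop:length3}: that proposition only produces a fibration over the pencil $|B|_a$, and your Leray/$R^1\pi_*\ko_S=0$ argument (or, equivalently, the paper's implicit remark that the pencil is linear on a regular surface) is what identifies the base with $\pp^1$ as required by the statement. No gaps.
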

   \begin{proof}
     Let us suppose that $S$ is regular, so that each algebraic family
     is indeed a linear system, and let $p\in S$ be general.

     The case $\length(Z'_p)=4$ is excluded since by
     Proposition \ref{prop:length4} the algebraic families $|E|_a$
     and $|F|_a$ would have to be linear systems.
     Thus necessarily  $\length(Z'_p)=3$, and we are done by
     Proposition \ref{prop:length3}.
   \end{proof}

   \bibliographystyle{amsalpha-tom}

   \providecommand{\bysame}{\leavevmode\hbox to3em{\hrulefill}\thinspace}

\end{document}